\newtheorem{thm}{Theorem}[section]
\newaliascnt{lem}{thm}
\newaliascnt{prp}{thm}  
\newtheorem{prp}[prp]{Proposition}
\newaliascnt{cor}{thm}  
\theoremstyle{definition}
\newaliascnt{dfn}{thm}  
\newtheorem{dfn}[dfn]{Definition}
\numberwithin{equation}{section}
\author{Tristan Bice}
\address{
Federal University of Santa Catarina\\
Florianopolis\\
Brazil
}
\email{Tristan.Bice@gmail.com}
\thanks{This research has been supported by a CAPES (Brazil) postdoctoral fellowship through the program ''Science without borders'', PVE project 085/2012.}
\keywords{Type Decomposition, Poset, Von Neumann Algebra, C*-Algebra}
\subjclass[2010]{Primary: 06A06; Secondary: 06B05, 06B10, 06C15}
\begin{document}

\title{Type Decomposition in Posets}

\begin{abstract}
Motivated by the classical type decomposition of von Neumann algebras, and various more recent extensions to other structures, we develop a type decomposition theory for general posets.
\end{abstract}

\maketitle

\section{Introduction}

The basic idea of type decomposition is simple \textendash\, to take a structure $S$ and decompose it into two parts $T$ and $U$, so $T$ will possess certain properties while $U$ will possess diametrically opposite properties, i.e. $T$ and $U$ will be of different `types'.  Various theorems might be dependent on these properties, and so type decomposition allows us to find the largest part of $S$ on which these theorems hold.  More interestingly, it sometimes occurs that a theorem can be proved for different types, but with different arguments, and proving the theorem for $S$ requires first applying these different arguments to $T$ and $U$.

These ideas originated in Murray and von Neumann's fundamental work in \cite{MurrayvonNemann1936}, where the structures in question were what we now call von Neumann algebras, and the structural properties they based their decompositions on involved commutativity and finiteness.  For example, they showed that any von Neumann algebra $A$ contains subalgebras $B$ and $C$ with $A=B\oplus C$ where $B$ is semifinite, meaning every non-zero projection dominates a non-zero finite projection, and $C$ is purely infinite, meaning it contains no non-zero finite projections whatsoever.  Following this, type decompositions were obtained for various other more general structures, usually lattices or orthoposets with some remnant of distributivity, like modularity or orthomodularity, together with some additional structure, like an equivalence relation or partial binary function satisfying certain properties (see \cite{Loomis1955}, \cite{Kaplansky1955}, \cite{MaedaF1959}, \cite{GoodearlWehrung2005} and \cite{FoulisPulmannova2013}).

The purpose of the present paper is to unify and generalize these type decomposition results, which we do in \autoref{IcapZ}, \autoref{cZI} and \autoref{homthm}.  This has its own intrinsic value, but actually our primary motivation is that we want to apply the theory to more general structures, annihilators in C*-algebras in particular (which generalize projections in von Neumann algebras).  This requires eliminating, or at least weakening, some of the assumptions made on structures in previous type decomposition results.  For example, annihilators in a C*-algebra may not be orthomodular, as is often the case with ortholattices derived from polarities, and the equivalence relation on annihilators in a C*-algebra that naturally generalizes Murray-von Neumann equivalence may not be orthogonally divisible (see \cite{Bice2014}).

One option would be take some collection of structural properties that holds for annihilators in C*-algebras and use them to prove the relevant type decomposition theorems in this more general context.  Indeed, whenever type decomposition is to be applied to some new kind of structure, this is generally the approach that is taken, and why a number of similar results have been proved in slightly different contexts.  Thus we feel it is time to take a more minimalist approach, which is what we aim for in this paper, where we use the weakest assumptions possible and introduce them only when necessary.  While this leads to some results which may, at first sight, seem somewhat technical, the advantage is that it more clearly illustrates precisely which assumptions are being used and where.  This, in turn, should facilitate the discovery of any future applications and generalizations.

\section{Type Decomposition}\label{TD}


First, let us review some basic order theoretic terminology.

\begin{dfn}
If $\mathbb{P}$ is a poset then $I\subseteq\mathbb{P}$ is
\begin{enumerate}
\item a \emph{lower set} if $p\leq q\in I\Rightarrow p\in I$, for all $p\in\mathbb{P}$.
\item \emph{completely upwards directed} if $S\subseteq I\Rightarrow\exists q\in I\forall p\in S(p\leq q)$.
\item a \emph{complete ideal} if $I$ is a completely upwards directed lower set.
\item\label{sublat} an \emph{upper (lower) complete sublattice} if $\bigvee S\in I$ ($\bigwedge S\in I$) for all $S\subseteq I$.
\end{enumerate}
\end{dfn}
As we are dealing with posets rather than complete lattices, infimums $\bigwedge$ and supremums $\bigvee$ do not always exist, and their existence in \eqref{sublat} is implicitly part of the definition.  In other words, when we say that $I$ is an upper complete sublattice we mean that every subset $S$ of $I$ actually has a supremum in $\mathbb{P}$, which also lies in $I$.  So any upper complete sublattice of $\mathbb{P}$ will be an upper complete lattice in its own right, even if $\mathbb{P}$ itself is not a lattice.  It follows that $I$ will also be a lower complete lattice, as infimums are supremums of lower bounds, although it is important to note that these infimums in $I$ may not agree with those in $\mathbb{P}$, i.e. $I$ may not be a (lower) sublattice of $\mathbb{P}$.

Also, $I$ will be a complete ideal if and only if $I=[0,q]=\{p\in\mathbb{P}:p\leq q\}$, where $q=\bigvee I$.  And $I\subseteq\mathbb{P}$ will be simultaneously a complete ideal and upper complete sublattice if and only if, for all $S\subseteq I$, \[S\subseteq I\quad\Leftrightarrow\quad\bigvee S\in I.\]  For type decomposition, we relativize this with respect to some $Z\subseteq\mathbb{P}$, which will play the role traditionally occupied by the centre.  And from now on we assume our posets always have a lower bound, which we denote by $0$.

\begin{dfn}\label{compdef}
For a poset $\mathbb{P}$ and $Z\subseteq\mathbb{P}$, $S\subseteq\mathbb{P}$ is \emph{$Z$-disjoint} if we have $f:S\rightarrow Z$ with $p\leq f(p)$ and $f(p)\wedge f(q)=0$, for distinct $p,q\in S$, and $I\subseteq\mathbb{P}$ is \emph{$Z$-complete} if
\begin{enumerate}
\item\label{compdef1} $\bigvee S\in I$, whenever $S\subseteq I$ is $Z$-disjoint, and
\item\label{compdef2} $p,q\in I$, whenever $p\vee q\in I$ and $\{p,q\}$ is $Z$-disjoint.
\end{enumerate}
\end{dfn}

In particular, if $I$ is $Z$-complete then $0=\bigvee\emptyset\in I$.  Also note that if $I$ is a lower set then \eqref{compdef2} is immediate.  In fact, if $\mathbb{P}$ is a lattice then the $Z$-complete lower sets are precisely the $\mathscr{P}_Z(\mathbb{P})$-ideals defined in \cite{Niederle2006}, where $\mathscr{P}_Z(\mathbb{P})$ denotes the collection of all $Z$-disjoint subsets of $\mathbb{P}$.  While if $\mathbb{P}$ is an effect algebra with centre $Z$ then the $Z$-complete lower sets are precisely the strongly type determining (STD) subsets defined in \cite{FoulisPulmannova2010} \S4.  And in this case $\mathbb{P}$ itself will be $Z$-complete precisely when $\mathbb{P}$ is centrally orthocomplete, as defined in \cite{FoulisPulmannova2010b}.

The other basic ingredient for type decomposition is centrality.

\begin{dfn}
If $\mathbb{P}$ is a poset and $S\subseteq\mathbb{P}$ then $Z\subseteq\mathbb{P}$ is \emph{$S$-central} if, for all $p\in S$ and $y\in Z$, we have $z\in Z$ with $y\wedge z=0$ and $p=q\vee r$, for some $q\leq y$ and $r\leq z$.
\end{dfn}

If $\mathbb{P}$ is an ortholattice then $z\in\mathbb{P}$ is central in the usual sense (see \cite{MacLaren1964} \S3 or \cite{Kalmbach1983} \S3 Theorem 1) if and only if $\{z,z^\perp\}$ is $\mathbb{P}$-central.  More generally, in an arbitrary lattice (see \cite{MaedaMaeda1970} Definition (4.12)), or even an arbitrary poset (see \cite{GudderHaskins1974}) we call $z\in\mathbb{P}$ central iff there exists $z^\perp\in\mathbb{P}$ such that $\mathbb{P}$ is canonically isomorphic to $[0,z]\times[0,z^\perp]$, i.e. isomorphic via the maps $\psi:\mathbb{P}\rightarrow[0,z]\times[0,z^\perp]$ and $\phi:[0,z]\times[0,z^\perp]\rightarrow\mathbb{P}$ defined by
\begin{equation}\label{centralmaps}
\psi(p)=(p\wedge z,p\wedge z^\perp)\quad\textrm{and}\quad\phi(p,q)=p\vee q.
\end{equation}
Thus the centre $\mathrm{C}(\mathbb{P})$, i.e. the subset of all central elements of $\mathbb{P}$, is $\mathbb{P}$-central by the above definition.  In applications $Z$ will often be the centre but $\mathbb{P}$-central subsets can, in general, be very much larger than the centre (basically because of the differing order of quantifiers).  For example, $\mathbb{P}$ itself will be $\mathbb{P}$-central if $\mathbb{P}$ is a meet-semilattice that is section complemented (meaning sections, i.e. intervals with lower bound $0$, are complemented posets), while $\mathrm{C}(\mathbb{P})$ is necessarily a Boolean (i.e. distributive complemented) lattice.  There is also value in dealing with a strict subset $Z$ of $\mathrm{C}(\mathbb{P})$, as the lattice $\mathrm{C}(\mathbb{P})$ itself may not be complete.

We are now ready for our first general type decomposition.

\begin{thm}\label{IcapZ}
If $\mathbb{P}$ is a poset, $I,Z\subseteq\mathbb{P}$ are $Z$-complete and $Z$ is $Z$-central then $I\cap Z$ is a complete ideal of $Z$ and upper complete sublattice of $\mathbb{P}$.  Moreover, $z=\bigvee I\cap Z$ is the unique $z\in Z$ such that, for all $y\in Z$,
\begin{equation}\label{IcapZeq}
y\wedge z=0\quad\Leftrightarrow\quad[0,y]\cap I\cap Z=\{0\}.
\end{equation}
\end{thm}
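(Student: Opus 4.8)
My plan hinges on one observation about how $Z$-centrality interacts with clauses \eqref{compdef1} and \eqref{compdef2} of $Z$-completeness, which I would record first. Given $p,y\in Z$, $Z$-centrality (applied to $Z$) yields some $w\in Z$ with $y\wedge w=0$ together with a splitting $p=q\vee r$, $q\leq y$, $r\leq w$. Then $\{q,r\}$ is $Z$-disjoint (witnessed by $q\mapsto y$, $r\mapsto w$) and $q\vee r=p\in Z$, so clause \eqref{compdef2} for $Z$ forces $q,r\in Z$; and if in addition $p\in I$, the same clause for $I$ gives $q,r\in I$. Thus, under the hypotheses, any $p\in I\cap Z$ can be ``split along'' any $y\in Z$ into two pieces of $I\cap Z$, one below $y$ and one below some $w\in Z$ with $y\wedge w=0$. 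I would use this repeatedly.

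Several parts of the statement then become routine. First $0=\bigvee\emptyset\in I\cap Z$. Next, $I\cap Z$ is a lower set of $Z$: if $q\in Z$ with $q\leq p\in I\cap Z$, split $p$ along $q$ as $p=q'\vee r$ with $q'\leq q$ and $r\leq w$, $q\wedge w=0$; since $q'\leq q\leq p$ one checks $q\vee r=p$, and as $\{q,r\}$ is $Z$-disjoint with $q\vee r=p\in I$, clause \eqref{compdef2} puts $q$ in $I$, hence in $I\cap Z$. The forward direction of \eqref{IcapZeq} is immediate, since $p\in[0,y]\cap I\cap Z$ forces $p\leq y$ and $p\leq z=\bigvee(I\cap Z)$, so $p\leq y\wedge z=0$. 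Uniqueness in the ``moreover'': if $z_1,z_2\in Z$ both satisfy \eqref{IcapZeq} then $y\wedge z_1=0\Leftrightarrow y\wedge z_2=0$ for all $y\in Z$; were $z_1\not\leq z_2$, splitting $z_1$ along $z_2$ would give a nonzero $r\leq z_1$ with $r\leq w$ for some $w\in Z$ with $z_2\wedge w=0$, so $w\wedge z_2=0$ while $r$ is a nonzero common lower bound of $w$ and $z_1$ — contradicting \eqref{IcapZeq} for both. Hence $z_1\leq z_2$, and symmetrically $z_1=z_2$.

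The real content, and where I expect the main obstacle to lie, is that $I\cap Z$ is an upper complete sublattice of $\mathbb{P}$; the remaining assertions fall out of this. Fix $S\subseteq I\cap Z$ and let $\mathcal D$ be the poset, under inclusion, of pairwise disjoint $T\subseteq I\cap Z$ each of whose members lies below some element of $S$. Unions of chains in $\mathcal D$ again lie in $\mathcal D$, so Zorn's lemma provides a maximal $T_0$. As a pairwise disjoint subset of $Z$, $T_0$ is $Z$-disjoint, so by clause \eqref{compdef1} for both $Z$ and $I$ the element $t^{\ast}:=\bigvee T_0$ exists and lies in $I\cap Z$; and since each member of $T_0$ lies below an element of $S$, $t^{\ast}$ is below every upper bound of $S$. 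It remains to show $t^{\ast}$ is itself an upper bound of $S$, which is exactly where maximality and the splitting observation meet: if some $s\in S$ had $s\not\leq t^{\ast}$, splitting $s$ along $t^{\ast}$ gives $s=q\vee r$ with $q\leq t^{\ast}$, $r\leq w$, $t^{\ast}\wedge w=0$ and $q,r\in I\cap Z$, where $r\neq0$, $r\leq s$, and $r$ is disjoint from every member of $T_0$ (since $t\leq t^{\ast}$ and $t^{\ast}\wedge w=0$); then $T_0\cup\{r\}$ lies in $\mathcal D$ and properly contains $T_0$, a contradiction. Hence $t^{\ast}=\bigvee S\in I\cap Z$.

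It remains to assemble the pieces. The last step makes $I\cap Z$ completely upwards directed in $Z$ — each $S\subseteq I\cap Z$ has upper bound $\bigvee S\in I\cap Z$ — so, together with the lower set property, $I\cap Z$ is a complete ideal of $Z$; and with $S=I\cap Z$ we get $z:=\bigvee(I\cap Z)\in I\cap Z$ and $I\cap Z=\{p\in Z:p\leq z\}$. For the remaining implication of \eqref{IcapZeq}: assuming $[0,y]\cap I\cap Z=\{0\}$, split $z$ along $y$ as $z=q\vee r$ with $q\leq y$, $r\leq w$, $y\wedge w=0$ and $q,r\in I\cap Z$; then $q\in[0,y]\cap I\cap Z=\{0\}$, so $z=r\leq w$, whence any common lower bound of $y$ and $z$ is $\leq y\wedge w=0$, i.e.\ $y\wedge z=0$. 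So the only step requiring genuine work is the upper complete sublattice claim — the Zorn argument and the verification, via the splitting observation, that the supremum of a maximal disjoint refinement cannot fail to dominate $S$; once the splitting observation is in place the rest is bookkeeping.
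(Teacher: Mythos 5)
Your proof is correct, and its core mechanism is the same as the paper's: use $Z$-centrality to split an element of $I\cap Z$ along any $y\in Z$, use clause \eqref{compdef2} of $Z$-completeness to keep both pieces in $I\cap Z$, and use clause \eqref{compdef1} on the resulting disjoint families. Where you genuinely diverge is in the key step, closure of $I\cap Z$ under arbitrary suprema in $\mathbb{P}$: the paper runs a transfinite recursion that disjointifies a given family $(z_\alpha)\subseteq I\cap Z$ into a pairwise disjoint family $(y_\alpha)\subseteq I\cap Z$ with the same partial suprema at every stage (including limits), whereas you invoke Zorn's lemma on maximal pairwise disjoint subsets of $I\cap Z$ refining $S$, and then use maximality plus the splitting observation to show the supremum of the maximal family is exactly $\bigvee S$. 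The trade-off: the paper's recursion is more explicit but silently carries the bookkeeping that $\bigvee_{\alpha<\beta}y_\alpha=\bigvee_{\alpha<\beta}z_\alpha$ for all $\beta$; your version replaces that bookkeeping with a single maximality argument, at the cost of constructing the supremum less directly (as the unique upper bound of $S$ below all upper bounds). A further small difference is your uniqueness argument for \eqref{IcapZeq}: it is symmetric in $z_1,z_2$ and applies \eqref{IcapZeq} at the central witness $w$ rather than at the split-off piece, so it needs only $Z$-centrality, whereas the paper's version also uses $Z$-completeness of $Z$ to place the piece in $Z$ first. Both routes prove the theorem as stated.
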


\begin{proof}
If $z\in I\cap Z$ and $y\in[0,z]\cap Z$ then, as $Z$ is $Z$-central, we have $x\in Z$ with $x\wedge y=0$ and $z=p\vee q$, for some $p\leq x$ and $q\leq y$.  As $y\leq z$, we have $z=p\vee y$ which, as $I$ is $Z$-complete, means $y\in I$.  As $y\in[0,z]\cap Z$ was arbitrary, $I\cap Z$ is a lower set in $Z$.

Now say we have $(z_\alpha)\subseteq I\cap Z$.  Let $y_0=z_0$.  As $Z$ is $Z$-central, we have $y\in Z$ with $y\wedge z_{0}=0$ and $z_1=p\vee y_1$, for some $p\leq z_0$ and $y_1\leq y$.  As $I$ and $Z$ are $Z$-complete, we have $y_1\in I\cap Z$ and $y_0\vee y_1\in I\cap Z$.  But $y_0\vee y_1=z_0\vee p\vee y_1=z_0\vee z_1$ so $z_0\vee z_1\in I\cap Z$.  By recursion we can continue in this way to obtain a transfinite sequence $(y_\alpha)\subseteq I\cap Z$ with $y_\alpha\wedge y_\beta=0$, whenever $\alpha\neq\beta$, and $\bigvee_{\alpha<\beta}y_\alpha=\bigvee_{\alpha<\beta}z_\alpha$, for all $\beta$, which, as $I$ and $Z$ are $Z$-complete, means that $\bigvee z_\alpha\in I\cap Z$.  As $(z_\alpha)\subseteq I\cap Z$ was arbitrary, $I\cap Z$ is closed under arbitrary supremums.

In particular, we have $z=\bigvee I\cap Z\in I\cap Z$, and clearly $[0,y]\cap I\cap Z=\{0\}$ for any $y\in Z$ with $y\wedge z=0$.  While if $y\in Z$ then, as $Z$ is $Z$-central, we have $x\in Z$ with $x\wedge y=0$ and $z=p\vee q$, for some $p\leq x$ and $q\leq y$.  As $I$ and $Z$ are $Z$-complete, $q\in I\cap Z$.  So if $[0,y]\cap I\cap Z=\{0\}$ then $q=0$ and hence $z=p\leq x$, which means $y\wedge z=0$.

Now say we have another $z'\in Z$ satisfying \eqref{IcapZeq}.  As $Z$ is $Z$-central, we have $y\in Z$ with $y\wedge z=0$ and $z'=p\vee q$ with $p\leq y$ and $q\leq z$.  Thus $[0,y]\cap I\cap Z=\{0\}$ and hence $[0,p]\cap I\cap Z=\{0\}$.  As $Z$ is $Z$-complete, $p\in Z$ which, by our assumption on $z'$, means $0=z'\wedge p=p$ and hence $z'=q\leq z$.  The same argument with $z$ and $z'$ reversed shows that $z\leq z'$.
\end{proof}

If the $z$ above has a complement $y$, then \eqref{IcapZeq} shows that $y$ has the opposite type to $z$, i.e. we get a complementary type decomposition.  Also, \eqref{IcapZeq} shows that the elements of $Z$ of this opposite type will also form a complete ideal in $Z$ precisely when $z$ has a \emph{pseudocomplement} $z^\perp=\bigvee\{y\in Z:y\wedge z=0\}$ (see \cite{Birkhoff1967} Ch 5 \S8).  And this will yield a direct product type decomposition precisely when $z$ is central.

If $\mathbb{P}$ is a section complemented complete lattice then $\mathbb{P}$ itself will be $\mathbb{P}$-central and $\mathbb{P}$-complete, in which case \autoref{IcapZ} with $Z=\mathbb{P}$ says that the $\mathbb{P}$-complete subsets are precisely the complete ideals of $\mathbb{P}$.  A slightly more interesting situation arises, as mentioned in the introduction, when $\mathbb{P}$ is the complete lattice of projections $\mathcal{P}(A)$ of a von Neumann algebra $A$ and $Z$ is the complete sublattice of central projections $\mathcal{P}(A\cap A')$.  Then, letting $I$ be the set of finite projections, i.e. those $p\in\mathcal{P}(A)$ such that $pAp$ is a finite von Neumann algebra, we see that $I$ is $Z$-complete, because a direct sum of finite von Neumann algebras is again finite.  Thus \autoref{IcapZ} applies to give a unique central finite projection $z$ such that $z^\perp$ is properly infinite, i.e. does not dominate any central finite projection.  In terms of classical type decompostion terminology for von Neumann algebras, $zA(=zAz)$ consists of the type $\mathrm{I}_n$ parts, for finite $n$, together with the type $\mathrm{II}_1$ part of $A$, while $z^\perp A$ consists of the type $\mathrm{I}_\infty$, type $\mathrm{II}_\infty$ and type $\mathrm{III}$ part of $A$.  Likewise, we can apply \autoref{IcapZ} with the abelian projections as $I$ instead, and then $zA$ would be the type $\mathrm{I}_1$ part of $A$.  Indeed, the classical type decomposition of von Neumann algebras is obtained from combining these decompositions with other type decompositions obtained from using $I$ and $Z$ in different ways.  We shall examine one of these next, and complete the picture with the type decompositions obtained in \S\ref{HD}.

\begin{dfn}
If $\mathbb{P}$ is a poset and $Z\subseteq\mathbb{P}$ then $\mathrm{c}_Z(p)$ is the \emph{$Z$-cover} of $p$ when \[\mathrm{c}_Z(p)=\bigwedge([p,1]\cap Z).\]
\end{dfn}

To ensure that $\mathrm{c}_Z(p)$ is defined and in $Z$, for all $p\in\mathbb{P}$, we assume that $Z\subseteq\mathbb{P}$ is a lower complete sublattice of $\mathbb{P}$ (in particular, $1=\bigwedge\emptyset\in Z$).  In this case, note that $S\subseteq\mathbb{P}$ will be $Z$-disjoint iff $\mathrm{c}_Z(s)\wedge\mathrm{c}_Z(t)=0$, for all distinct $s,t\in S$.  Also, we denote supremums in $Z$ by $\vee_Z$, as they may differ from supremums $\vee$ in $\mathbb{P}$, and we denote the set of all $Z$-covers of elements of $I$ by $\mathrm{c}_ZI$.

We now have the machinery for our second general type decomposition.

\begin{thm}\label{cZI}
If $\mathbb{P}$ is a poset, $Z$ is a lower complete sublattice of $\mathbb{P}$, $Z$ is $\mathbb{P}$-central and $I\subseteq\mathbb{P}$ is $Z$-complete then $\mathrm{c}_ZI$ is an upper complete sublattice of $Z$.  Moreover, $z=\bigvee_Z\mathrm{c}_ZI$ is the unique $z\in Z$ such that, for all $y\in Z$,
\begin{equation}\label{cZIeq}
y\wedge z=0\quad\Leftrightarrow\quad[0,y]\cap I=\{0\}.
\end{equation}
\end{thm}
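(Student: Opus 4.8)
The plan is to follow the blueprint of \autoref{IcapZ}, with $\mathrm{c}_ZI$ in the role of $I\cap Z$ and the suprema $\bigvee_Z$ of $Z$ replacing those of $\mathbb{P}$. I will use three elementary facts about $\mathrm{c}_Z$, all immediate from the description of $\mathrm{c}_Z(p)$ as the least element of $Z$ above $p$: it is order preserving; $\mathrm{c}_Z(p)=0$ forces $p=0$; and $\mathrm{c}_Z(\bigvee S)=\bigvee_Z\{\mathrm{c}_Z(s):s\in S\}$ whenever $\bigvee S$ exists in $\mathbb{P}$ (here the less obvious inequality holds because $\bigvee_Z\{\mathrm{c}_Z(s):s\in S\}\in Z$ is an upper bound of $S$ in $\mathbb{P}$, hence dominates $\mathrm{c}_Z(\bigvee S)$). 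I also observe that $0\in Z$: since $Z$ is $\mathbb{P}$-central, taking $y=p=1$ in its definition produces $z\in Z$ with $z=1\wedge z=0$. Finally, $Z$, being a lower complete sublattice with top $1$, is a complete lattice, so every supremum $\bigvee_Z$ below exists.

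\emph{Step 1 (the crux): $\mathrm{c}_ZI$ is an upper complete sublattice of $Z$.} Given $S\subseteq\mathrm{c}_ZI$, write $S=\{\mathrm{c}_Z(p_\alpha):\alpha<\lambda\}$ with each $p_\alpha\in I$ (well-ordering the index set) and build by transfinite recursion a $Z$-disjoint family $(q_\alpha)_{\alpha<\lambda}$ in $I$ such that, with $P_\beta:=\bigvee_{\alpha<\beta}q_\alpha$ (which exists and lies in $I$ by \autoref{compdef}\eqref{compdef1}), one has $\mathrm{c}_Z(P_\beta)=\bigvee_{Z,\,\alpha<\beta}\mathrm{c}_Z(p_\alpha)$ for every $\beta\leq\lambda$. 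At a successor $\beta=\gamma+1$, apply $\mathbb{P}$-centrality of $Z$ to $p_\gamma$ and $\mathrm{c}_Z(P_\gamma)\in Z$ to write $p_\gamma=u\vee q_\gamma$ with $u\leq\mathrm{c}_Z(P_\gamma)$, $q_\gamma\leq x$, and $x\in Z$ with $x\wedge\mathrm{c}_Z(P_\gamma)=0$. Then $\mathrm{c}_Z(u)\wedge\mathrm{c}_Z(q_\gamma)\leq\mathrm{c}_Z(P_\gamma)\wedge x=0$, so $\{u,q_\gamma\}$ is $Z$-disjoint; since $u\vee q_\gamma=p_\gamma\in I$, \autoref{compdef}\eqref{compdef2} gives $q_\gamma\in I$. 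Likewise $\mathrm{c}_Z(q_\gamma)\leq x$ while $\mathrm{c}_Z(q_\alpha)\leq\mathrm{c}_Z(P_\gamma)$ for $\alpha<\gamma$, so $\{q_\alpha:\alpha\leq\gamma\}$ remains $Z$-disjoint and $P_{\gamma+1}=P_\gamma\vee q_\gamma$; finally, by the join formula for $\mathrm{c}_Z$, $\mathrm{c}_Z(P_{\gamma+1})=\mathrm{c}_Z(P_\gamma)\vee_Z\mathrm{c}_Z(q_\gamma)=\mathrm{c}_Z(P_\gamma)\vee_Z\mathrm{c}_Z(p_\gamma)$, the last equality because $\mathrm{c}_Z(u)\leq\mathrm{c}_Z(P_\gamma)$ and $q_\gamma\leq p_\gamma$. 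Limit stages follow from $P_\beta=\bigvee_{\gamma<\beta}P_\gamma$ together with the join formula. Taking $\beta=\lambda$ yields $\bigvee_ZS=\mathrm{c}_Z(P_\lambda)\in\mathrm{c}_ZI$. In particular $z:=\bigvee_Z\mathrm{c}_ZI$ lies in $\mathrm{c}_ZI$; fix $p\in I$ with $z=\mathrm{c}_Z(p)$.

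\emph{Step 2: $z$ is the unique element of $Z$ satisfying \eqref{cZIeq}.} If $y\in Z$ with $y\wedge z=0$ and $r\in[0,y]\cap I$, then $\mathrm{c}_Z(r)\leq\mathrm{c}_Z(y)=y$ and $\mathrm{c}_Z(r)\leq z$, so $\mathrm{c}_Z(r)=0$ and $r=0$. Conversely, if $[0,y]\cap I=\{0\}$, apply $\mathbb{P}$-centrality of $Z$ to $p$ and $y$ to write $p=p_1\vee p_2$ with $p_1\leq y$, $p_2\leq x$, $x\in Z$, $x\wedge y=0$; then $\{p_1,p_2\}$ is $Z$-disjoint, so $p_1,p_2\in I$ by \autoref{compdef}\eqref{compdef2}, whence $p_1\in[0,y]\cap I=\{0\}$, so $p=p_2\leq x$ and $z=\mathrm{c}_Z(p)\leq x$, giving $y\wedge z\leq y\wedge x=0$. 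For uniqueness, suppose $z'\in Z$ also satisfies \eqref{cZIeq}; applying $\mathbb{P}$-centrality of $Z$ to $z'$ and $z$ gives $z'=q\vee r$ with $r\leq z$, $q\leq x$, $x\in Z$, $x\wedge z=0$. Then $\mathrm{c}_Z(q)\leq x$ forces $\mathrm{c}_Z(q)\wedge z=0$, so $[0,\mathrm{c}_Z(q)]\cap I=\{0\}$ by \eqref{cZIeq} for $z$, hence $\mathrm{c}_Z(q)\wedge z'=0$ by \eqref{cZIeq} for $z'$; since $q\leq\mathrm{c}_Z(q)\wedge z'$ this forces $q=0$, so $z'=r\leq z$, and the symmetric argument gives $z\leq z'$.

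I expect Step 1 to be the real obstacle: one cannot simply take the supremum of the $p_\alpha$ in $\mathbb{P}$ (it need not exist), nor lift $\bigvee_Z\{\mathrm{c}_Z(p_\alpha):\alpha<\lambda\}$ back along $\mathrm{c}_Z$ to a single element of $I$, so the transfinite recursion replacing the $p_\alpha$ by a $Z$-disjoint family with the same running $Z$-covers seems unavoidable. The delicate point is checking, at each successor stage, that $\mathbb{P}$-centrality peels off exactly the part of $p_\gamma$ not yet dominated by $\mathrm{c}_Z(P_\gamma)$ in a way that both keeps $(q_\alpha)$ $Z$-disjoint and preserves $\mathrm{c}_Z(P_\beta)=\bigvee_{Z,\,\alpha<\beta}\mathrm{c}_Z(p_\alpha)$, which is precisely where condition \eqref{compdef2} of \autoref{compdef} and the behaviour of $\mathrm{c}_Z$ under joins are used. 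Once Step 1 is in hand, Step 2 is routine, mirroring the corresponding parts of the proof of \autoref{IcapZ}.
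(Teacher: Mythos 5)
Your proposal is correct and follows essentially the same route as the paper: the paper's proof also replaces $(p_\alpha)$ by a $Z$-disjoint family $(q_\alpha)\subseteq I$ via $\mathbb{P}$-centrality applied to $p_\gamma$ and the running cover, maintaining $\mathrm{c}_Z(\bigvee_{\alpha<\beta}q_\alpha)=\bigvee_{Z,\alpha<\beta}\mathrm{c}_Z(p_\alpha)$, and then argues the characterization and uniqueness of $z=\bigvee_Z\mathrm{c}_ZI$ exactly as in your Step 2. Your write-up merely makes explicit the limit stages and the join formula for $\mathrm{c}_Z$, which the paper leaves implicit.
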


\begin{proof} Say we have $(p_\alpha)\subseteq I$.  Let $q_0=p_0$.  As $Z$ is $\mathbb{P}$-central, we have $z\in Z$ with $z\wedge\mathrm{c}_Z(p_0)=0$ and $p_1=r\vee q_1$, for some $r\leq\mathrm{c}_Z(p_0)$ and $q_1\leq z$.  As $I$ is $Z$-complete, $q_1\in I$ and $q_0\vee q_1\in I$.  Moreover, \[\mathrm{c}_Z(q_0\vee q_1)=\mathrm{c}_Z(p_0)\vee_Z\mathrm{c}_Z(r)\vee_Z\mathrm{c}_Z(q_1)=\mathrm{c}_Z(p_0)\vee_Z\mathrm{c}_Z(p_1).\]  Recursively continuing in this way we obtain a transfinite sequence $(q_\alpha)\subseteq I$ with $\mathrm{c}_Z(q_\alpha)\wedge\mathrm{c}_Z(q_\beta)=0$, whenever $\alpha\neq\beta$, and $\mathrm{c}_Z(\bigvee_{\alpha<\beta}q_\alpha)=\bigvee_{Z,\alpha<\beta}\mathrm{c}_Z(p_\alpha)$, for all $\beta$, which, as $I$ is $Z$-complete, means that $\bigvee_Z\mathrm{c}_Z(p_\alpha)=\mathrm{c}_Z(\bigvee p_\alpha)\in\mathrm{c}_ZI$.  As $(p_\alpha)\subseteq I$ was arbitrary, $\mathrm{c}_ZI$ is closed under arbitrary supremums in $Z$.

In particular, $z=\bigvee_Z\mathrm{c}_ZI=\mathrm{c}_Z(p)$, for some $p\in I$, and clearly $[0,y]\cap I=\{0\}$ for any $y\in Z$ with $y\wedge z=0$.  While if $y\in Z$ then, as $Z$ is $\mathbb{P}$-central, we have $x\in Z$ with $x\wedge y=0$ and $p=q\vee r$, for some $q\leq x$ and $r\leq y$.  As $I$ is $Z$-complete, $r\in I$, so if $[0,y]\cap I=\{0\}$ then $r=0$.  This means $p=q$ so $z=\mathrm{c_Z}(p)\leq x$ and hence $y\wedge z=0$.

Now say we have another $z'\in Z$ satisfying \eqref{cZI}.  As $Z$ is $\mathbb{P}$-central, we have $y\in Z$ with $y\wedge z=0$ and $z'=p\vee q$ with $p\leq y$ and $q\leq z$.  Thus $[0,y]\cap I=\{0\}$ and hence $[0,\mathrm{c}_Z(p)]\cap I=\{0\}$.  By our assumption on $z'$, $0=z'\wedge\mathrm{c}_Z(p)=\mathrm{c}_Z(p)$ and hence $z'=q\leq z$.  The same argument with $z$ and $z'$ reversed shows that $z\leq z'$.
\end{proof}


Again considering the case $\mathbb{P}=\mathcal{P}(A)$ and $Z=\mathcal{P}(A\cap A')$, where $A$ is von Neumann algebra, \autoref{cZI}  applies when $I$ is the set of finite projections, showing that $A$ contains a a unique central semifinite projection $z$ such that $z^\perp$ is purely infinite.  In terms of classical type decompostion terminology for von Neumann algebras, $zA$ consists of the type $\mathrm{I}$ and $\mathrm{II}$ parts while $z^\perp A$ is the type $\mathrm{III}$ part of $A$.  Again as before, we can apply \autoref{cZI} with the abelian projections as $I$ instead, and then $z$ would be the unique central discrete projection such that $z^\perp$ is continuous  (see \cite{Berberian1972} \S15 Definition 3 for this terminology), and $zA$ would be the type $\mathrm{I}$ part of $A$, while $z^\perp A$ would consist of the type $\mathrm{II}$ and $\mathrm{III}$ parts of $A$.

\section{Modularity}

As already mentioned, previous type decomposition results have focused on the case that $Z$ is contained in the centre of $\mathbb{P}$.  And some previous proofs of these results have indeed used the fact that the centre is distributive.  The previous section shows that distributivity is not vital for type decomposition, but nonetheless there are some extra things we can say in this case, or even when $Z$ is assumed to satisfy the following weaker assumption.

\begin{dfn}\label{Pmod}
For a poset $\mathbb{P}$, $Z\subseteq\mathbb{P}$ is \emph{$\mathbb{P}$-modular} if, whenever $y,z\in Z$, $y\wedge z=0$, $p\leq y$, $q\leq z$ and $p\vee q$ exists, we have $q=z\wedge(p\vee q)$.
\end{dfn}
So \autoref{Pmod} is saying that $Z$ is $\mathbb{P}$-modular if all disjoint pairs in $Z$ are modular pairs.\footnote{in a weak sense \textendash\, for lattices there is a standard notion of modular pair (see \cite{MaedaMaeda1970}) but for arbitrary posets there are a number of other possible generalizations (see \cite{ThakarePawarWaphare2004}) based on the canonical embedding of $\mathbb{P}$ in its Dedekind-MacNeille completion.}  The first thing this allows us to obtain is the following slightly different characterization of $Z$-completeness.  This shows that the $Z$-complete subsets are precisely the $P$-properties in \cite{MaedaF1959} Definition 1.3 (when $\mathbb{P}$ is a complete lattice with centre $Z$) and also the type-determining (TD) sets defined in \cite{FoulisPulmannova2010} \S4 (when $\mathbb{P}$ is a centrally orthocomplete effect algebra (COEA) with centre $Z$).

\begin{prp}\label{pwedgez}
If $\mathbb{P}$ is a poset and $Z\subseteq\mathbb{P}$ is $\mathbb{P}$-modular then $I\subseteq\mathbb{P}$ is $Z$-complete if (and only if, when $Z$ is $\mathbb{P}$-central)
\begin{itemize}
\item[\eqref{compdef1}] $\bigvee S\in I$, whenever $S\subseteq I$ is $Z$-disjoint, and
\item[\eqref{compdef2}$'$] $p\wedge z\in I$, whenever $p\in I$ and $z\in Z$.
\end{itemize}
\end{prp}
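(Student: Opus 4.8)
The plan is to note first that condition \eqref{compdef1} appears verbatim in both descriptions of $Z$-completeness, so the whole content of the proposition is the equivalence of \eqref{compdef2} with \eqref{compdef2}$'$ in the presence of \eqref{compdef1} and $\mathbb{P}$-modularity, with the extra hypothesis that $Z$ is $\mathbb{P}$-central needed only for the forward (``only if'') implication. I would therefore split the proof into the two implications, each of which is a short application of \autoref{Pmod} (together with $\mathbb{P}$-centrality in one case).

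For the ``if'' direction I would start from a $Z$-disjoint pair $\{p,q\}$ with $p\vee q\in I$, pick witnesses $y,w\in Z$ with $p\le y$, $q\le w$ and $y\wedge w=0$, and then apply $\mathbb{P}$-modularity twice: once to the pair $(y,w)$ to obtain $q=w\wedge(p\vee q)$, and once to the pair $(w,y)$, rewriting $q\vee p=p\vee q$, to obtain $p=y\wedge(p\vee q)$. Since $p\vee q\in I$ and $y,w\in Z$, condition \eqref{compdef2}$'$ then yields $p,q\in I$, which is precisely \eqref{compdef2}; together with \eqref{compdef1} this gives $Z$-completeness.

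For the ``only if'' direction I would assume in addition that $Z$ is $\mathbb{P}$-central and that $I$ is $Z$-complete, and verify \eqref{compdef2}$'$ directly. Given $p\in I$ and $z\in Z$, $\mathbb{P}$-centrality furnishes $x\in Z$ with $z\wedge x=0$ together with a decomposition $p=q\vee r$ with $q\le z$ and $r\le x$. Then $\{q,r\}$ is $Z$-disjoint, with witnesses $z$ and $x$, and $q\vee r=p\in I$, so \eqref{compdef2} forces $q\in I$. A final application of $\mathbb{P}$-modularity, to the pair $(x,z)$ with $r\le x$, $q\le z$ and $r\vee q$ existing, identifies $q=z\wedge(r\vee q)=z\wedge p$, so that $p\wedge z=q\in I$.

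The argument is essentially bookkeeping, and the one step I would flag as the main subtlety is the existence of the meets involved: none of $w\wedge(p\vee q)$, $y\wedge(p\vee q)$ or $z\wedge p$ is assumed in advance to exist in $\mathbb{P}$. Here I would lean on the convention of the paper that the conclusion ``$q=z\wedge(p\vee q)$'' in \autoref{Pmod} asserts both that the infimum exists and that it equals $q$; thus $\mathbb{P}$-modularity is doing double duty, simultaneously producing the required identities and guaranteeing that the expression $p\wedge z$ in \eqref{compdef2}$'$ even makes sense.
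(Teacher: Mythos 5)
Your proposal is correct and follows essentially the same route as the paper's proof: modularity applied in both directions to identify $p=(p\vee q)\wedge y$ and $q=(p\vee q)\wedge z$ for the ``if'' part, and centrality plus \eqref{compdef2} plus one application of modularity to identify $p\wedge z$ with the piece of the central decomposition lying under $z$ for the ``only if'' part. Your remark that the modularity conclusion carries the existence of the relevant meets is exactly the convention the paper relies on, so no gap remains.
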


\begin{proof}
If $I$ satisfies the above conditions then, for any $p,q\in\mathbb{P}$ with $\{p,q\}$ $Z$-disjoint and $p\vee q\in I$, we have $y,z\in Z$ with $p\leq y$, $q\leq z$ and $y\wedge z=0$ which, by $\mathbb{P}$-modularity, yields $p=(p\vee q)\wedge y\in I$ and $q=(p\vee q)\wedge z\in I$, so $I$ is $Z$-complete.  On the other hand, if $Z$ is $\mathbb{P}$-central then, for any $p\in I$ and $z\in Z$ we have $y\in Z$ with $y\wedge z=0$ and $p=q\vee r$, for some $q\leq y$ and $r\leq z$.  Thus if $I$ is $Z$-complete then $p\wedge z=(q\vee r)\wedge z=r\in I$, again using the $\mathbb{P}$-modularity of $Z$.
\end{proof}


While on the topic of $Z$-completeness, let us point out that pseudocomplements also allow for the following more symmetric characterization.

\begin{prp}
If $\mathbb{P}$ is a $Z$-complete poset, for pseudocomplemented $Z\subseteq\mathbb{P}$, then $I\subseteq\mathbb{P}$ will be $Z$-complete if and only if, for all $Z$-disjoint $S\subseteq I$, \[S\subseteq I\quad\Leftrightarrow\quad\bigvee S\in I.\]
\end{prp}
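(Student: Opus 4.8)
The plan is to check the two defining conditions of $Z$-completeness from \autoref{compdef} against the displayed equivalence, where $S$ ranges over all $Z$-disjoint subsets of $\mathbb{P}$. Note first that, since $\mathbb{P}$ is itself $Z$-complete, $\bigvee S$ exists in $\mathbb{P}$ for every such $S$, so the right-hand side of the equivalence is always meaningful.

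For the ``if'' implication, assume the equivalence holds. Condition \eqref{compdef1} is immediate, since a $Z$-disjoint $S\subseteq I$ trivially satisfies $S\subseteq I$, whence $\bigvee S\in I$. For \eqref{compdef2}, suppose $p\vee q\in I$ with $\{p,q\}$ $Z$-disjoint; applying the equivalence to $S=\{p,q\}$, whose supremum is $p\vee q$, gives $\{p,q\}\subseteq I$, i.e. $p,q\in I$. This direction needs nothing beyond the existence of the relevant suprema.

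For the ``only if'' implication, assume $I$ is $Z$-complete and fix a $Z$-disjoint $S\subseteq\mathbb{P}$ with witnessing $f:S\to Z$. If $S\subseteq I$ then $\bigvee S\in I$ by \eqref{compdef1}, so the work lies in the reverse direction: deducing $S\subseteq I$ from $\bigvee S\in I$. Fix $s_0\in S$ and set $z=f(s_0)$. For every $t\in S\setminus\{s_0\}$ we have $f(t)\wedge z=0$, so $f(t)\leq z^\perp$ and hence $t\leq z^\perp$; therefore $q=\bigvee(S\setminus\{s_0\})$, which exists because $S\setminus\{s_0\}$ is $Z$-disjoint and $\mathbb{P}$ is $Z$-complete, satisfies $q\leq z^\perp$. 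A routine check shows that an element of $\mathbb{P}$ is an upper bound of $\{s_0,q\}$ exactly when it is an upper bound of $S$, so $s_0\vee q=\bigvee S\in I$; and $\{s_0,q\}$ is $Z$-disjoint, witnessed by $s_0\mapsto z$ and $q\mapsto z^\perp$, since $z\wedge z^\perp=0$. Now \eqref{compdef2} gives $s_0\in I$, and as $s_0\in S$ was arbitrary, $S\subseteq I$.

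The only genuinely delicate step is this last reduction: pseudocomplementedness is exactly what allows the possibly infinite ``remainder'' $\bigvee(S\setminus\{s_0\})$ to be bounded by a single element $z^\perp$ of $Z$ disjoint from $f(s_0)$, so that an arbitrary $Z$-disjoint supremum is reduced to a $Z$-disjoint \emph{pair}, which is all that \eqref{compdef2} can handle; and $Z$-completeness of $\mathbb{P}$ is what ensures that remainder has a supremum in the first place.
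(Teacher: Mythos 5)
Your proof is correct and takes essentially the same route as the paper's: the nontrivial direction splits a $Z$-disjoint $S$ as $\{s_0\}\cup(S\setminus\{s_0\})$, uses $Z$-completeness of $\mathbb{P}$ to form $\bigvee(S\setminus\{s_0\})\leq f(s_0)^\perp$, and applies condition \eqref{compdef2} to the resulting $Z$-disjoint pair. You merely make explicit a couple of points the paper leaves implicit (the witnessing pair $f(s_0),f(s_0)^\perp$ for the $Z$-disjointness of $\{s_0,\bigvee(S\setminus\{s_0\})\}$ and the identity $s_0\vee\bigvee(S\setminus\{s_0\})=\bigvee S$), which is fine.
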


\begin{proof}
The `if' part is immediate.  Conversely, assume $I$ is $Z$-complete and say we have $Z$-disjoint $S\subseteq I$ with $\bigvee S\in I$ and let $f:S\rightarrow Z$ be as in \autoref{compdef}.  For $s\in S$, and $t\in T=S\setminus\{s\}$ we have $f(s)\wedge f(t)=0$ and hence $f(t)\leq f(s)^\perp$.  As $T$ is $Z$-disjoint and $\mathbb{P}$ is $Z$-complete, $\bigvee T$ exists and $\bigvee T\leq f(s)^\perp$ so $s\wedge\bigvee T=0$.  As $s\vee\bigvee T=\bigvee S\in I$ and $I$ is $Z$-complete, $s\in I$.  As $s\in S$ was arbitrary, we are done.
\end{proof}

Using $Z$-modularity rather than $\mathbb{P}$-modularity (which implies $Z$-modularity as long as $Z$ is a (upper) sublattice of $\mathbb{P}$), we also obtain a result on $Z$-covers.

\begin{prp}
If $Z$ is a $Z$-modular $\mathbb{P}$-central lower complete sublattice of $\mathbb{P}$ then, for all $p\in\mathbb{P}$ and $z\in Z$, we have $q\leq p,z$ with $\mathrm{c}_Z(q)=\mathrm{c}_Z(p)\wedge z$.
\end{prp}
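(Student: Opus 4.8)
The plan is to obtain $q$ directly from the $\mathbb{P}$-centrality of $Z$ and then to verify the cover identity by combining the behaviour of $\mathrm{c}_Z$ on joins with $Z$-modularity. Since $Z$ is $\mathbb{P}$-central, applying the definition to our $p\in\mathbb{P}$ and $z\in Z$ yields some $z'\in Z$ with $z\wedge z'=0$ and a decomposition $p=q\vee r$ where $q\leq z$ and $r\leq z'$. This $q$ will be the element we want: automatically $q\leq p$ and $q\leq z$, so everything reduces to showing $\mathrm{c}_Z(q)=\mathrm{c}_Z(p)\wedge z$.

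First I would record the consequences of this decomposition at the level of $Z$-covers. Since $q\leq z\in Z$ we get $\mathrm{c}_Z(q)\leq z$, and since $r\leq z'\in Z$ we get $\mathrm{c}_Z(r)\leq z'$, whence $\mathrm{c}_Z(r)\wedge z\leq z'\wedge z=0$. Next I would establish the join formula $\mathrm{c}_Z(p)=\mathrm{c}_Z(q)\vee_Z\mathrm{c}_Z(r)$: for $w\in Z$ we have $w\geq p$ iff $w\geq q$ and $w\geq r$ (since $p=q\vee r$), which, as $\mathrm{c}_Z(q)$ and $\mathrm{c}_Z(r)$ are the least elements of $Z$ lying above $q$ and $r$ respectively (here using that $Z$ is a lower complete sublattice), holds iff $w\geq\mathrm{c}_Z(q)\vee_Z\mathrm{c}_Z(r)$; taking the infimum of all such $w$ then gives the formula.

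Finally I would invoke $Z$-modularity inside $Z$. The elements $\mathrm{c}_Z(r),z\in Z$ are disjoint, we have $\mathrm{c}_Z(r)\leq\mathrm{c}_Z(r)$ and $\mathrm{c}_Z(q)\leq z$, and the join $\mathrm{c}_Z(r)\vee_Z\mathrm{c}_Z(q)$ exists (as $Z$, being closed under infima and containing $1$, is a complete lattice); so $Z$-modularity gives $\mathrm{c}_Z(q)=z\wedge(\mathrm{c}_Z(r)\vee_Z\mathrm{c}_Z(q))$, which by the join formula equals $z\wedge\mathrm{c}_Z(p)$, as required.

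None of the individual steps is hard, and I do not anticipate a genuine obstacle; the one point requiring care is the distinction between the join $\vee$ of $\mathbb{P}$ and the join $\vee_Z$ of $Z$. The join formula and the modularity step both take place in $Z$ and involve $\vee_Z$, which is exactly why $Z$-modularity — a statement about $\vee_Z$ — is the right hypothesis here, rather than $\mathbb{P}$-modularity; this is precisely the distinction flagged just before the statement.
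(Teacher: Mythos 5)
Your proof is correct and takes essentially the same route as the paper: the same $\mathbb{P}$-central decomposition $p=q\vee r$ with $q\leq z$ and $r$ under a complementary $z'\in Z$, followed by a single application of $Z$-modularity to a disjoint pair of central covers to extract $\mathrm{c}_Z(q)=\mathrm{c}_Z(p)\wedge z$. The only cosmetic difference is that you isolate the identity $\mathrm{c}_Z(p)=\mathrm{c}_Z(q)\vee_Z\mathrm{c}_Z(r)$ and apply modularity to it directly, whereas the paper obtains the same conclusion by showing $\mathrm{c}_Z(p)\wedge z\leq x$ for every $x\in Z$ above $q$; both arguments are valid and use the hypotheses in the same way.
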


\begin{proof}
As $Z$ is $\mathbb{P}$-central, given $p\in\mathbb{P}$ and $z\in Z$ we have $y\in Z$ with $y\wedge z=0$ and $p=q\vee r$, for some $q\leq z$ and $r\leq y$.  Take $x\in Z$ with $q\leq x$.  Then $p=q\vee r\leq(x\wedge\mathrm{c}_Z(p)\wedge z)\vee_Z(\mathrm{c}_Z(p)\wedge y)\in Z$, so \[\mathrm{c}_Z(p)\leq(x\wedge\mathrm{c}_Z(p)\wedge z)\vee_Z(\mathrm{c}_Z(p)\wedge y)\leq\mathrm{c}_Z(p).\]  As $Z$ is $Z$-modular, $z\wedge\mathrm{c}_Z(p)=x\wedge\mathrm{c}_Z(p)\wedge z\leq x$.  As $x\geq q$ was an arbitrary element of $Z$, and $q\leq\mathrm{c}_Z(p)\wedge z\in Z$, we have $\mathrm{c}_Z(q)=\mathrm{c}_Z(p)\wedge z$.
\end{proof}

If, in the situation above, $p\wedge z$ exists then the $q$ above satisfies $q\leq p\wedge z$ so
\begin{equation}\label{cpz}
\mathrm{c}_Z(q)\leq\mathrm{c}_Z(p\wedge z)\leq\mathrm{c}_Z(p)\wedge z=\mathrm{c}_Z(q).
\end{equation}
In the particular case that $\mathbb{P}$ is a centrally orthocomplete effect algebra (COEA) and $Z$ is its centre, this shows that $\mathrm{c}_Z$ is a hull mapping, according to \cite{FoulisPulmannova2010b} Definition 5.1.

We can also use $Z$-modularity to show that the upper complete sublattice obtained in \autoref{cZI} is additionally a lower set in $Z$. 

\begin{thm}\label{cZIMZ}
If $\mathbb{P}$ is a poset, $Z$ is a $Z$-modular lower complete sublattice of $\mathbb{P}$, $Z$ is $\mathbb{P}$-central and $I\subseteq\mathbb{P}$ is $Z$-complete then $\mathrm{c}_ZI$ is complete ideal of $Z$.
\end{thm}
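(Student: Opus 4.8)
The plan is to bootstrap from \autoref{cZI}. That theorem already delivers that $\mathrm{c}_ZI$ is an upper complete sublattice of $Z$, hence is closed under arbitrary supremums $\bigvee_Z$ computed in $Z$; in particular it is completely upwards directed in $Z$, since for any $S\subseteq\mathrm{c}_ZI$ the element $\bigvee_ZS$ lies in $\mathrm{c}_ZI$ and bounds $S$. So the entire task reduces to checking that $\mathrm{c}_ZI$ is a \emph{lower set} in $Z$: given $p\in I$ and $y\in Z$ with $y\leq\mathrm{c}_Z(p)$, I need to produce some $q\in I$ with $\mathrm{c}_Z(q)=y$.

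Write $z=\mathrm{c}_Z(p)$. First I would split off the ``$y$-part'' of $p$ using $\mathbb{P}$-centrality of $Z$: choose $w\in Z$ with $y\wedge w=0$ and $p=q\vee r$ for some $q\leq y$ and $r\leq w$. Then $\{q,r\}$ is $Z$-disjoint (witnessed by $y$ and $w$) with $q\vee r=p\in I$, so condition~\eqref{compdef2} of $Z$-completeness immediately yields $q\in I$. It then remains only to show $\mathrm{c}_Z(q)=y$ for this particular $q$, and this is the sole point at which $Z$-modularity enters. Since $q\leq y\in Z$ we have $\mathrm{c}_Z(q)\leq y$ at once. For the reverse inequality, take any $x\in Z$ with $q\leq x$; then $q\leq x\wedge z\wedge y$ and $r\leq z\wedge w$, whence \[p=q\vee r\leq(x\wedge z\wedge y)\vee_Z(z\wedge w)\leq z,\] so $z=(x\wedge z\wedge y)\vee_Z(z\wedge w)$. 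Now $x\wedge z\wedge y\leq y$, $z\wedge w\leq w$ and $y\wedge w=0$ in $Z$, so the $Z$-modularity of $Z$ gives $x\wedge z\wedge y=y\wedge z$, which equals $y$ because $y\leq z$; in particular $y\leq x$. As $x$ ranged over all elements of $Z$ dominating $q$, this proves $y\leq\bigwedge([q,1]\cap Z)=\mathrm{c}_Z(q)$, hence $\mathrm{c}_Z(q)=y\in\mathrm{c}_ZI$, as required.

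The main thing to be careful about is purely organisational: the same element $q$ must do double duty \textendash\ lying in $I$ (via~\eqref{compdef2}) and having $Z$-cover exactly $y$ \textendash\ so it is cleanest to manufacture $q$ directly from $\mathbb{P}$-centrality as above, rather than invoking the preceding proposition on $Z$-covers merely for the \emph{existence} of a $q\leq p,y$ with $\mathrm{c}_Z(q)=\mathrm{c}_Z(p)\wedge y$ and then separately arguing that some such $q$ can be taken inside $I$. Indeed the modularity computation in the second paragraph is just that proposition's argument specialised to the case $z=\mathrm{c}_Z(p)\geq y$, and one could alternatively cite it while remarking that the witness it constructs, coming from a $Z$-disjoint decomposition of $p$, automatically lies in $I$ when $p$ does. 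Everything beyond this is a formal consequence of \autoref{cZI}.
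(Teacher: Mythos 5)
Your proposal is correct and takes essentially the same route as the paper's proof: split $p$ over $y$ via $\mathbb{P}$-centrality, use condition \eqref{compdef2} of $Z$-completeness to place the piece under $y$ in $I$, identify its $Z$-cover with $y$ via $Z$-modularity, and quote \autoref{cZI} for the upper complete sublattice part. The only (cosmetic) difference is in the modularity step: the paper concludes in one line from $\mathrm{c}_Z(p)=\mathrm{c}_Z(q)\vee_Z\mathrm{c}_Z(r)$ and a single application of $Z$-modularity, whereas you run the same computation element-wise over all $x\in[q,1]\cap Z$, essentially re-proving the earlier unnamed proposition on $Z$-covers in the special case $y\leq\mathrm{c}_Z(p)$.
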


\begin{proof}
If $p\in I$ and $y\in[0,\mathrm{c}_Z(p)]\cap Z$ then, as $Z$ is $\mathbb{P}$-central, we have $x\in Z$ with $x\wedge y=0$ and $p=q\vee r$, for some $q\leq x$ and $r\leq y$.  Thus $\mathrm{c}_Z(p)=\mathrm{c}_Z(q)\vee_Z\mathrm{c}_Z(r)$ and hence $y=y\wedge_Z\mathrm{c}_Z(p)=y\wedge_Z(\mathrm{c}_Z(q)\vee_Z\mathrm{c}_Z(r))=\mathrm{c}_Z(r)$, as $Z$ is $Z$-modular.  As $I$ is $Z$-complete, $r\in I$ so $y\in\mathrm{c}_ZI$.  As $y\in[0,\mathrm{c}_Z(p)]\cap Z$ was arbitrary, $\mathrm{c}_ZI$ is a lower set in $Z$.  Also, $Z$ is an upper complete sublattice of $Z$, by \autoref{cZI}.
\end{proof}



\section{Complete Relations}

Type decomposition is always done with respect to some $Z$-complete $I\subseteq\mathbb{P}$.  The next natural question to ask is where these $Z$-complete subsets might come from.  It turns out that there are two major sources of $Z$-complete subsets, relations and classes.\footnote{Somewhat surprisingly, it even turns out that often the same $Z$-complete subset can, with the appropriate relation and class, be derived either way.  For example, the Boolean class corresponds to the central equivalence relation $\sim_Z$, as discussed below, the modular class corresponds to the perspectivity relation (having a common complement), while the orthomodular class corresponds to the orthoperspectivity relation (having a common orthogonal complement).}  However, using classes requires restricting $Z$ to subsets of the centre and, moreover, making additional assumptions on $\mathbb{P}$, like assuming $\mathbb{P}$ is section semicomplemented (see \cite{MaedaMaeda1970} Theorem (5.13)).  In the present paper we wish to avoid such assumptions, so we shall focus solely on relations.


Note below $\mathbb{P}\times\mathbb{P}$ has the product order, i.e. $(p,q)\leq(r,s)\Leftrightarrow p\leq r$ and $q\leq s$.

\begin{dfn}
For a poset $\mathbb{P}$, $Z\subseteq\mathbb{P}$ and binary relation $\precsim$ on $\mathbb{P}$, we say $\precsim$ is \emph{$Z$-complete} if $\precsim$ is $=_Z$-complete, considering $\precsim$ and $=_Z$ as subsets of $\mathbb{P}\times\mathbb{P}$.
\end{dfn}

As $=_Z$, i.e. $\{(z,z):z\in Z\}$, is $\mathbb{P}\times\mathbb{P}$-modular precisely when $Z$ is $\mathbb{P}$-modular, we have the following rephrasing of \autoref{pwedgez}.  Note, however, that even if $Z$ is $\mathbb{P}$-central, $=_Z$ may not be $\mathbb{P}\times\mathbb{P}$-central, unless $Z$ is also pseudocomplemented.  Another important thing to note is that $Z$-complete relations need only be centrally divisible, by \eqref{compdef2}$'$ below, rather than orthogonally divisible, as required for the Loomis dimension relations in \cite{Loomis1955} page 2 (B) and the Sherstnev-Kalinin congruences in \cite{FoulisPulmannova2013} Definition 4.1 (SK3d).  This is important because, as mentioned in the introduction, the analog of Murray-von Neumann equivalence for annihilators in a C*-algebras is centrally, but possibly not orthogonally, divisible.

\begin{prp}\label{pwedgezrel}
For a poset $\mathbb{P}$ and $\mathbb{P}$-modular $Z\subseteq\mathbb{P}$, a binary relation $\precsim$ on $\mathbb{P}$ will be $Z$-complete if (and only if, when $=_Z$ is $\mathbb{P}\times\mathbb{P}$-central), for $Z$-disjoint $(z_\alpha)$,
\begin{itemize}
\item[\eqref{compdef1}] $\bigvee p_\alpha\precsim\bigvee q_\alpha$, whenever $p_\alpha\precsim q_\alpha$ and $p_\alpha,q_\alpha\leq z_\alpha$, for all $\alpha$, and
\item[\eqref{compdef2}$'$] $p\wedge z\precsim q\wedge z$, whenever $p\precsim q$ and $z\in Z$.
\end{itemize}
\end{prp}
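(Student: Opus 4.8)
The plan is to obtain this as a direct instance of \autoref{pwedgez}, applied to the product poset $\mathbb{Q}=\mathbb{P}\times\mathbb{P}$ with $=_Z$ playing the role of $Z$ and the relation $\precsim$, viewed as a subset of $\mathbb{Q}$, playing the role of $I$. First I would note, as already recorded just before the statement, that $=_Z$ is $\mathbb{Q}$-modular precisely because $Z$ is $\mathbb{P}$-modular, so \autoref{pwedgez} applies; its `if' half needs only this, while its `only if' half additionally requires $=_Z$ to be $\mathbb{Q}$-central, which is exactly the parenthetical hypothesis here. So everything reduces to translating the two conditions of \autoref{pwedgez}, namely \eqref{compdef1} and \eqref{compdef2}$'$ for the triple $(\mathbb{Q},{=_Z},{\precsim})$, into the two displayed conditions on $\precsim$.

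That translation rests on a short dictionary. Joins and meets in $\mathbb{Q}$ are computed coordinatewise, so $\bigvee\{(p_\alpha,q_\alpha):\alpha\}=(\bigvee p_\alpha,\bigvee q_\alpha)$ with one side existing if and only if the other does, and likewise $(p,q)\wedge(z,z)=(p\wedge z,q\wedge z)$; a pair $(p,q)$ lies in the relation $\precsim$ iff $p\precsim q$; and a subset $S$ of $\mathbb{Q}$ is $=_Z$-disjoint iff it can be enumerated faithfully as $\{(p_\alpha,q_\alpha):\alpha\}$ for some $Z$-disjoint family $(z_\alpha)$ in $Z$ with $p_\alpha,q_\alpha\leq z_\alpha$ for all $\alpha$ --- indeed, a witnessing map $f:S\to{=_Z}$ as in \autoref{compdef} may always be taken of the diagonal form $(p_\alpha,q_\alpha)\mapsto(z_\alpha,z_\alpha)$, and $(z_\alpha,z_\alpha)\wedge(z_\beta,z_\beta)=(0,0)$ says exactly that $z_\alpha\wedge z_\beta=0$. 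Feeding the two conditions of \autoref{pwedgez} through this dictionary, condition \eqref{compdef1} for $(\mathbb{Q},{=_Z},{\precsim})$ becomes ``$\bigvee p_\alpha\precsim\bigvee q_\alpha$ whenever $(z_\alpha)$ is $Z$-disjoint, $p_\alpha\precsim q_\alpha$ and $p_\alpha,q_\alpha\leq z_\alpha$ for all $\alpha$'', and condition \eqref{compdef2}$'$ becomes ``$p\wedge z\precsim q\wedge z$ whenever $p\precsim q$ and $z\in Z$'', which are precisely the two conditions in the statement.

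The only genuine work lies in the last entry of that dictionary, and that is where I expect the (mild) obstacle to be: one must check, in both directions, that the data ``$S\subseteq{\precsim}$ with $S$ being $=_Z$-disjoint'' corresponds to the data ``a $Z$-disjoint family $(z_\alpha)$ together with relations $p_\alpha\precsim q_\alpha$ satisfying $p_\alpha,q_\alpha\leq z_\alpha$'', taking a little care with the repetitions an enumeration may introduce --- harmless, since $S$ is a set and joins ignore repetitions, while a faithful enumeration recovers a genuinely pairwise-disjoint family $(z_\alpha)$ --- and with the implicit existence clauses for the coordinatewise joins and meets, so that these line up with the corresponding implicit clauses in \autoref{compdef} and in condition \eqref{compdef2}$'$. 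Granting this dictionary, the proposition is nothing but \autoref{pwedgez} read through it.
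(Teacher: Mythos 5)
Your proposal is correct and is exactly the paper's route: the paper obtains \autoref{pwedgezrel} as a direct rephrasing of \autoref{pwedgez} applied to $\mathbb{P}\times\mathbb{P}$ with $=_Z$ in place of $Z$ and the relation $\precsim$ in place of $I$, justified by the remark that $=_Z$ is $\mathbb{P}\times\mathbb{P}$-modular precisely when $Z$ is $\mathbb{P}$-modular (and $\mathbb{P}\times\mathbb{P}$-centrality of $=_Z$ for the converse). Your explicit dictionary for coordinatewise joins/meets and $=_Z$-disjointness simply spells out the translation the paper leaves implicit.
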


Just like with projections in von Neumann algebras, $\precsim$-finite elements can be defined for any relation $\precsim$ on a poset $\mathbb{P}$.

\begin{dfn}\label{findef}
For a poset $\mathbb{P}$ and binary relation $\precsim$ on $\mathbb{P}$, $p\in\mathbb{P}$ is \emph{$\precsim$-finite} if \[p\precsim q\leq p\quad\Rightarrow\quad p=q,\] for all $q\in\mathbb{P}$.  We denote the set of all $\precsim$-finite elements of $\mathbb{P}$ by $\mathrm{F}_\precsim$.
\end{dfn}

And again, just like with projections, if $\precsim$ is a $Z$-complete relation then the $\precsim$-finite elements will form a $Z$-complete subset and so the type decomposition results in \autoref{IcapZ} and \autoref{cZI} can be applied.



\begin{prp}\label{fincom}
If $\mathbb{P}$ is a $Z$-complete poset, $Z=Z^\perp$ is contained in the centre of $\mathbb{P}$ and $\precsim$ is a $Z$-complete reflexive binary relation on $\mathbb{P}$ then $\mathrm{F}_\precsim$ is $Z$-complete.
\end{prp}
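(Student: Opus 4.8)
The plan is to verify directly that $I=\mathrm{F}_\precsim$ meets the two requirements \eqref{compdef1} and \eqref{compdef2} of $Z$-completeness from \autoref{compdef}. Throughout I use that each $z\in Z$, being central, furnishes the canonical decomposition $\mathbb{P}\cong[0,z]\times[0,\bar z]$ of \eqref{centralmaps} (with $\bar z$ the complement of $z$ in $\mathbb{P}$): thus $p\wedge z$ exists for every $p$, the element $z$ distributes over every supremum that exists, and suprema are taken coordinatewise. One checks readily that, as $\mathbb{P}$ is $Z$-complete, every $=_Z$-disjoint subset of $\mathbb{P}\times\mathbb{P}$ has a (coordinatewise) supremum, so all the suprema in the $Z$-completeness conditions for $\precsim$ exist. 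Finally, $Z=Z^\perp$ gives, for each $z\in Z$, the pseudocomplement $z^\perp=\bigvee\{y\in Z:y\wedge z=0\}\in Z$, with $z\wedge z^\perp=0$ and $y\le z^\perp$ whenever $y\in Z$ and $y\wedge z=0$.

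For \eqref{compdef2}, suppose $p\vee q\in\mathrm{F}_\precsim$ with $\{p,q\}$ being $Z$-disjoint, say $p\le y$, $q\le z$ and $y\wedge z=0$ for some $y,z\in Z$. Given $p'\le p$ with $p\precsim p'$, reflexivity gives $(q,q)\in\precsim$, and $\{(p,p'),(q,q)\}\subseteq\precsim$ is $=_Z$-disjoint (via $(y,y)$ and $(z,z)$), so \eqref{compdef1} for $\precsim$ yields $p\vee q\precsim p'\vee q$, the join $p'\vee q$ existing as the relevant coordinatewise supremum. Since $p'\vee q\le p\vee q\in\mathrm{F}_\precsim$, we get $p\vee q=p'\vee q$; meeting both sides with the central $y$ and using $q\wedge y\le z\wedge y=0$ together with $p,p'\le y$ gives $p=p'$. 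Hence $p\in\mathrm{F}_\precsim$, and $q\in\mathrm{F}_\precsim$ by symmetry.

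For \eqref{compdef1}, let $S\subseteq\mathrm{F}_\precsim$ be $Z$-disjoint via $f\colon S\to Z$ with $s\le f(s)=:z_s$ and $z_s\wedge z_t=0$ for distinct $s,t$, and set $p=\bigvee S$, which exists since $\mathbb{P}$ is $Z$-complete. Fix $s\in S$. As $z_t\le z_s^\perp$ for all $t\in S\setminus\{s\}$, we have $\bigvee(S\setminus\{s\})\le z_s^\perp$, and since every $t\in S$ satisfies $t\le z_t\le z_s\vee z_s^\perp$ also $p\le z_s\vee z_s^\perp$. Centrality of $z_s$ then gives $p\wedge z_s=\bigvee_{t\in S}(t\wedge z_s)=s$ and, for any $p'\le p$, $(p'\wedge z_s)\vee(p'\wedge z_s^\perp)=p'\wedge(z_s\vee z_s^\perp)=p'$. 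Now assume $p\precsim p'$ with $p'\le p$. In $\mathbb{P}\times\mathbb{P}$ we may write
\[(p,p')=\bigl(s,\,p'\wedge z_s\bigr)\vee\bigl(\textstyle\bigvee(S\setminus\{s\}),\,p'\wedge z_s^\perp\bigr),\]
and the two summands form a $=_Z$-disjoint pair, witnessed by $(z_s,z_s)$ and $(z_s^\perp,z_s^\perp)$. Since $(p,p')\in\precsim$, \eqref{compdef2} for $\precsim$ gives $(s,\,p'\wedge z_s)\in\precsim$, i.e. $s\precsim p'\wedge z_s$; as $p'\wedge z_s\le p\wedge z_s=s$ and $s\in\mathrm{F}_\precsim$, this forces $p'\wedge z_s=s$. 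Since $s\in S$ was arbitrary, $p'\ge s$ for every $s\in S$, so $p'\ge\bigvee S=p$ and thus $p'=p$. Therefore $p=\bigvee S\in\mathrm{F}_\precsim$.

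The delicate point is the displayed decomposition: to extract from a single instance $p\precsim p'$ its ``$z_s$-component'' $s\precsim p'\wedge z_s$, the only available tool is condition \eqref{compdef2} for $\precsim$, which demands a genuine $=_Z$-disjoint splitting of $(p,p')$ inside $\mathbb{P}\times\mathbb{P}$. This is precisely where $Z=Z^\perp$ enters — it puts $z_s^\perp$ in $Z$, so that $(z_s^\perp,z_s^\perp)$ can serve as the disjointness witness for the remainder term while $z_s\vee z_s^\perp$ still dominates $p'$ (compare the remark preceding \autoref{pwedgezrel}). Everything else is routine manipulation of the central product decompositions.
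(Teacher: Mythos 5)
Your proof is correct and takes essentially the same approach as the paper: for suprema of $Z$-disjoint families of finite elements you split $(p,p')\in\;\precsim$ along the central witness $z_s$ and its complement via condition \eqref{compdef2} for $\precsim$ and then use finiteness of $s$, exactly as in the paper's first paragraph, and for the second condition you use reflexivity together with condition \eqref{compdef1} for $\precsim$, as in the paper's second paragraph. The only organizational difference is that you verify Definition \autoref{compdef}\eqref{compdef2} directly by meeting with the central witness $y$, whereas the paper establishes the meet condition \eqref{compdef2}$'$ and (implicitly) invokes \autoref{pwedgez}.
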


\begin{proof}
Assume $S\subseteq\mathrm{F}_\precsim$ is $Z$-disjoint, as witnessed by $f:S\rightarrow Z$, but $p=\bigvee S\notin\mathrm{F}_\precsim$, so $p\precsim q<p$, for some $q\in\mathbb{P}$.  Thus $s\nleq q$, for some $s\in S$, and then $s=p\wedge f(s)\precsim q\wedge f(s)<s$, contradicting $s\in S\subseteq F_\precsim$.

Now assume $p\in\mathrm{F}_\precsim$ and $z\in Z$ but $p\wedge z\notin\mathrm{F}_\precsim$, so we have $q<p\wedge z$ with $p\wedge z\precsim q$.  As $\precsim$ is reflexive and $\precsim$ is $Z$-complete, $p=(p\wedge z)\vee(p\wedge z^\perp)\precsim q\vee(p\wedge z^\perp)<p$, contradicting $p\in\mathrm{F}_\precsim$.
\end{proof}

The next natural question to ask is where these $Z$-complete relations might come from.  If $\mathbb{P}$ is defined from some algebraic structure then $Z$-complete relations can often be derived from this.  Murray-von Neumann equivalence of projections and its natural generalization to annihilators in a C*-algebra are examples.  The order structure of $\mathbb{P}$ can also be used to define $Z$-complete relations, like perspectivity (having a common complement), but as with the class approach, this often requires $\mathbb{P}$ to satisfy some additional assumptions.  We can also use $Z$ itself to define important $Z$-complete relations, as we now show.

\begin{dfn}
For a poset $\mathbb{P}$ and $Z\subseteq\mathbb{P}$, we define relations $\sim_Z$ and $\precsim_Z$ on $\mathbb{P}$ by
\[p\precsim_Zq\ \Leftrightarrow\ [p,1]\cap Z\subseteq[q,1]\cap Z\quad\textrm{and}\quad p\sim_Zq\ \Leftrightarrow\ [p,1]\cap Z=[q,1]\cap Z.\]
\end{dfn}

Also, we call $\mathbb{P}$ \emph{$Z$-directed} if $p\vee q$ exists, for all $Z$-disjoint $\{p,q\}\subseteq\mathbb{P}$.

\begin{thm}
If $\mathbb{P}$ is a $Z$-complete poset and $Z$ is a $Z$-directed $Z$-modular lower sublattice of $\mathbb{P}$ then $\precsim_Z$ and $\sim_Z$ are $Z$-complete.
\end{thm}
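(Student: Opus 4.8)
The plan is to verify directly the two clauses of \autoref{compdef} for $\precsim_Z$, viewed as a subset of $\mathbb{P}\times\mathbb{P}$ with $=_Z$ in place of the central set, and then to deduce the corresponding statements for $\sim_Z$. (One cannot simply invoke \autoref{pwedgezrel} here, as that presupposes $Z$ to be $\mathbb{P}$-modular, which is stronger than $Z$-modular.) The basic observation is that a family $\{(p_\alpha,q_\alpha)\}$ in $\mathbb{P}\times\mathbb{P}$ is $=_Z$-disjoint precisely when there are $z_\alpha\in Z$ with $p_\alpha,q_\alpha\leq z_\alpha$ and $z_\alpha\wedge z_\beta=0$ for $\alpha\neq\beta$; note that all the meets formed below remain in $Z$ since $Z$ is a lower sublattice.

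For clause \eqref{compdef1}, take a $=_Z$-disjoint $S=\{(p_\alpha,q_\alpha)\}\subseteq\,\precsim_Z$ with witnessing elements $z_\alpha$ as above. Then $\{p_\alpha\}$ and $\{q_\alpha\}$ are $Z$-disjoint, so $\bigvee p_\alpha$ and $\bigvee q_\alpha$ exist by $Z$-completeness of $\mathbb{P}$ and $\bigvee S=(\bigvee p_\alpha,\bigvee q_\alpha)$. For any $w\in Z$ with $w\geq\bigvee p_\alpha$ we have $p_\alpha\leq w\wedge z_\alpha\in Z$ for every $\alpha$, so $p_\alpha\precsim_Z q_\alpha$ forces $q_\alpha\leq w\wedge z_\alpha\leq w$ and hence $\bigvee q_\alpha\leq w$; thus $\bigvee S\in\,\precsim_Z$. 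This step uses only $Z$-completeness of $\mathbb{P}$ and that $Z$ is a lower sublattice.

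Clause \eqref{compdef2} is the crux. Suppose $(p\vee r,q\vee s)=(p,q)\vee(r,s)\in\,\precsim_Z$ with $\{(p,q),(r,s)\}$ being $=_Z$-disjoint, witnessed by $y,z\in Z$ with $p,q\leq y$, $r,s\leq z$ and $y\wedge z=0$; by symmetry it suffices to prove $p\precsim_Z q$. Fix $w\in Z$ with $w\geq p$. Then $w\wedge y\in Z$ dominates $p$ and is $Z$-disjoint from $z$, so $u:=(w\wedge y)\vee_Z z$ exists because $Z$ is $Z$-directed; since $u$ dominates $w\wedge y\geq p$ and $z\geq r$, it dominates $p\vee r$ and therefore also $q\vee s$, so that $q\leq u$ and $q\leq y$. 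The last, decisive step is to shrink $u$ back down: applying $Z$-modularity to the disjoint pair $z,y$ in $Z$ (with $z\leq z$ and $w\wedge y\leq y$) gives $y\wedge u=y\wedge\bigl((w\wedge y)\vee_Z z\bigr)=w\wedge y$, whence $q\leq y\wedge u=w\wedge y\leq w$. As $w$ was arbitrary, $p\precsim_Z q$, and symmetrically $r\precsim_Z s$. I expect this squeezing step to be the main obstacle, since it is the only place that genuinely needs both the $Z$-directedness and the $Z$-modularity of $Z$.

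Finally, for $\sim_Z$ I use that $p\sim_Zq$ holds iff $p\precsim_Zq$ and $q\precsim_Zp$, together with the fact that transposing the two coordinates carries a $=_Z$-disjoint family to a $=_Z$-disjoint family with the same witnessing elements. Hence a $=_Z$-disjoint $S\subseteq\,\sim_Z$ lies, along with its coordinate transpose, in $\precsim_Z$; applying clause \eqref{compdef1} for $\precsim_Z$ to $S$ and to its transpose gives $\bigvee S\in\,\sim_Z$, and applying clause \eqref{compdef2} for $\precsim_Z$ to a pair and to its transpose gives clause \eqref{compdef2} for $\sim_Z$. This shows that both $\precsim_Z$ and $\sim_Z$ are $Z$-complete.
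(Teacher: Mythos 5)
Your proof is correct and follows essentially the same route as the paper: a direct verification of the two clauses of $Z$-completeness, using $Z$-directedness to form the join $(w\wedge y)\vee_Z z$ and $Z$-modularity to squeeze it back down to $w\wedge y$, with the $\sim_Z$ case deduced from $\precsim_Z$ by transposing coordinates. The only (harmless) difference is a direction convention: you read $p\precsim_Z q$ literally as $[p,1]\cap Z\subseteq[q,1]\cap Z$, whereas the paper's own argument uses the transposed reading $\mathrm{c}_Z(p)\leq\mathrm{c}_Z(q)$; since $Z$-completeness of a relation is invariant under swapping the two coordinates, both readings yield the theorem.
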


\begin{proof}
If $p_\lambda\precsim_Zq_\lambda$, for all $\lambda\in\Lambda$, and $(p_\lambda,q_\lambda)_{\lambda\in\Lambda}$ is $=_Z$-disjoint then $(p_\lambda)$ and $(q_\lambda)$ are $Z$-disjoint which, as $\mathbb{P}$ is $Z$-complete, means $p=\bigvee p_\lambda$ and $q=\bigvee q_\lambda$ exist.  If $z\in Z\cap[q,1]$ then $q_\lambda\leq z$ and hence, as $p_\lambda\precsim_Zq_\lambda$, $p_\lambda\leq z$, for all $\lambda\in\Lambda$, so $p\leq z$.  As $z\in Z$ was arbitrary, $p\precsim_Zq$.

On the other hand, if $p\vee q\precsim p'\vee q'$ and $\{(p,p'),(q,q')\}$ is $=_Z$-disjoint then we have $y,z\in Z$ with $p,p'\leq y$, $q,q'\leq z$ and $y\wedge z=0$.  For any $x\in Z$ with $p'\leq x$, $x\wedge y\wedge z=0$ and $p'\vee q'\leq(x\wedge y)\vee_Zz\in Z$ and hence $p\leq p\vee q\leq(x\wedge y)\vee_Zz$ so $p\leq y\wedge((x\wedge y)\vee_Zz)=x\wedge y\leq x$.  As $x$ was arbitrary, $p\precsim_Zp'$ and, likewise, $q\precsim_Zq'$.  Thus $\precsim_Z$ and $\sim_Z=(\precsim_Z\cap\succsim_Z)$ are $Z$-complete.
\end{proof}

In fact $\precsim_Z$ is the weakest $Z$-complete relation with $\{0\}^\succsim=\{p\in\mathbb{P}:p\precsim0\}=\{0\}$.

\begin{prp}
If $Z$ is $\mathbb{P}$-central and $\precsim$ is a $Z$-complete relation on $\mathbb{P}$ with $\{0\}^\succsim=\{p\in\mathbb{P}:p\precsim0\}=\{0\}$ then $p\precsim q\Rightarrow p\precsim_Zq$, for all $p,q\in\mathbb{P}$.
\end{prp}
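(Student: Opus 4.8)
The plan is to unwind the definitions and reduce the statement to the ``central divisibility'' half of $=_Z$-completeness, applied to $\precsim$ as a subset of $\mathbb{P}\times\mathbb{P}$. Fix $p,q\in\mathbb{P}$ with $p\precsim q$. By the definition of $\precsim_Z$ it suffices to show that every $z\in Z$ with $q\leq z$ also satisfies $p\leq z$, so fix such a $z$. The first move is to apply $\mathbb{P}$-centrality of $Z$ to the element $p$ and the element $z\in Z$: this produces $w\in Z$ with $z\wedge w=0$, together with a decomposition $p=a\vee b$ where $a\leq z$ and $b\leq w$.

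The key step is to transport this decomposition into $\mathbb{P}\times\mathbb{P}$, where $\precsim$ sits as an $=_Z$-complete subset. Since $a\vee b=p$ exists and $q\vee 0=q$, we have $(p,q)=(a,q)\vee(b,0)$ in $\mathbb{P}\times\mathbb{P}$, and the pair $\{(a,q),(b,0)\}$ is $=_Z$-disjoint: the assignment $(a,q)\mapsto(z,z)$, $(b,0)\mapsto(w,w)$ takes values in $=_Z$, dominates the two points (using $a,q\leq z$ and $b,0\leq w$), and satisfies $(z,z)\wedge(w,w)=(z\wedge w,z\wedge w)=(0,0)$, the bottom of $\mathbb{P}\times\mathbb{P}$. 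Since $(p,q)$ lies in $\precsim$ (as $p\precsim q$), clause \eqref{compdef2} in the definition of $=_Z$-completeness forces $(b,0)$ to lie in $\precsim$ as well, i.e.\ $b\precsim 0$. Hence $b\in\{0\}^\succsim=\{0\}$, so $b=0$ and $p=a\vee 0=a\leq z$. As $z$ was an arbitrary element of $Z$ above $q$, this gives $p\precsim_Z q$.

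There is no genuinely hard analytic step; the only things to be careful about are the bookkeeping inside $\mathbb{P}\times\mathbb{P}$ — namely that the join $(a,q)\vee(b,0)$ exists and equals $(p,q)$, and that the displayed assignment is a legitimate $=_Z$-disjointness witness — and the realisation that it is clause \eqref{compdef2}, the central-divisibility half of $Z$-completeness, rather than clause \eqref{compdef1}, that carries the argument. It is worth emphasising that no further hypotheses are used: neither $\mathbb{P}$-modularity of $Z$, nor that $Z$ be a lower sublattice, nor pseudocomplementedness, nor even $0\in Z$; plain $\mathbb{P}$-centrality of $Z$ together with $=_Z$-completeness of $\precsim$ suffice.
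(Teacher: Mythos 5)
Your proposal is correct and follows essentially the same route as the paper's proof: apply $\mathbb{P}$-centrality of $Z$ to $p$ and $z\in[q,1]\cap Z$, split $(p,q)=(a,q)\vee(b,0)$ as an $=_Z$-disjoint pair in $\mathbb{P}\times\mathbb{P}$, and use clause \eqref{compdef2} of $Z$-completeness to get $b\precsim 0$, hence $b=0$ and $p\leq z$. The paper merely leaves the product-poset bookkeeping implicit, which you spell out explicitly.
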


\begin{proof}
If $p\precsim q$ and $z\in Z\cap[q,1]$, we have $y\in Z$ with $y\wedge z=0$ and $p=r\vee s$, for some $r\leq y$ and $s\leq z$.  Then $r,0\leq y$ and $s,q\leq z$ and $r\vee s=p\precsim q=0\vee q$ so $r\precsim0$ which, by assumption, means $r=0$ and hence $p=s\leq z$.  As $z\in Z\cap[q,1]$ was arbitrary, $p\precsim_Zq$.
\end{proof}


If $Z$ is a lower complete sublattice of $\mathbb{P}$ then we immediately see that
\[p\precsim_Zq\ \Leftrightarrow\ \mathrm{c}_Z(p)\leq\mathrm{c}_Z(p)\quad\textrm{and}\quad p\sim_Zq\ \Leftrightarrow\ \mathrm{c}_Z(p)=\mathrm{c}_Z(p).\]
Moreover, $\mathrm{c}_Z$ actually characterizes $\mathrm{F}_{\precsim_Z}=\mathrm{F}_{\sim_Z}$ in the following way.

\begin{thm}
If $\mathbb{P}$ is $Z$-directed and $Z$ is a $\mathbb{P}$-central $\mathbb{P}$-modular $Z$-modular lower complete sublattice of $\mathbb{P}$ then $p\in\mathrm{F}_{\precsim_Z}$ if and only if $[0,p]=\{p\wedge z:z\in Z\}$.  Moreover, in this case $[0,p]$ is isomorphic to $[0,\mathrm{c}_Z(p)]\cap Z$ via the maps \[q\mapsto\mathrm{c}_Z(q)\quad\textrm{and}\quad z\mapsto p\wedge z.\]
\end{thm}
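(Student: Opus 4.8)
The plan is to reduce the whole statement to one \emph{key fact}: if $p\in\mathrm{F}_{\precsim_Z}$ then $q=p\wedge\mathrm{c}_Z(q)$ for every $q\in[0,p]$. Before proving it I would record a few consequences of the hypotheses. Applying $\mathbb{P}$-centrality to $p$ and any $z\in Z$ yields $z'\in Z$ with $z\wedge z'=0$ and $p=a\vee b$ for some $a\le z$ and $b\le z'$; two applications of $\mathbb{P}$-modularity (with $z,z'$ and then with $z',z$) force $a=p\wedge z$ and $b=p\wedge z'$, so $p\wedge z$ exists and $p=(p\wedge z)\vee(p\wedge z')$. I would also use the routine facts that $\mathrm{c}_Z$ is monotone, that $\mathrm{c}_Z(a\vee b)=\mathrm{c}_Z(a)\vee_Z\mathrm{c}_Z(b)$ whenever $a\vee b$ exists, that $q\le z\in Z$ implies $\mathrm{c}_Z(q)\le z$ (so $q\le p$ gives $q\le p\wedge\mathrm{c}_Z(q)$), and that $q\precsim_Z q'$ means $\mathrm{c}_Z(q)\le\mathrm{c}_Z(q')$; note $\{p\wedge z:z\in Z\}\subseteq[0,p]$ is automatic.

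Granting the key fact the theorem follows quickly. For the `only if' direction of the equivalence: if $p\in\mathrm{F}_{\precsim_Z}$ and $q\in[0,p]$ then $q=p\wedge\mathrm{c}_Z(q)\in\{p\wedge z:z\in Z\}$. For the `if' direction: assume $[0,p]=\{p\wedge z:z\in Z\}$ and $p\precsim_Z q\le p$; then $\mathrm{c}_Z(p)\le\mathrm{c}_Z(q)\le\mathrm{c}_Z(p)$, and writing $q=p\wedge z$ with $z\in Z$ we get, since $q\le z$, that $q=p\wedge\mathrm{c}_Z(q)=p\wedge\mathrm{c}_Z(p)=p$, so $p\in\mathrm{F}_{\precsim_Z}$. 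For the isomorphism, the maps $q\mapsto\mathrm{c}_Z(q)$ and $z\mapsto p\wedge z$ are well-defined and order-preserving between $[0,p]$ and $[0,\mathrm{c}_Z(p)]\cap Z$; the key fact gives $p\wedge\mathrm{c}_Z(q)=q$ for $q\in[0,p]$, while the earlier proposition producing $r\le p$ and $r\le z$ with $\mathrm{c}_Z(r)=\mathrm{c}_Z(p)\wedge z$, together with \eqref{cpz}, gives $\mathrm{c}_Z(p\wedge z)=\mathrm{c}_Z(p)\wedge z=z$ for $z\in[0,\mathrm{c}_Z(p)]\cap Z$; so the two maps are mutually inverse order isomorphisms. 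This last step is the only place $Z$-modularity is used.

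The substance, and the main obstacle, is the key fact. Fix $p\in\mathrm{F}_{\precsim_Z}$ and $q\le p$, set $c=\mathrm{c}_Z(q)$, and use the preliminary decomposition to write $p=q'\vee p''$ with $q'=p\wedge c$ and $p''=p\wedge c'$, where $c'\in Z$ and $c\wedge c'=0$. From $q\le p$ and $q\le c$ we get $q\le q'$, hence $c=\mathrm{c}_Z(q)\le\mathrm{c}_Z(q')\le c$, so $\mathrm{c}_Z(q')=c$. Since $q\le c$ and $p''\le c'$ with $c\wedge c'=0$, the pair $\{q,p''\}$ is $Z$-disjoint, so $q\vee p''$ exists by $Z$-directedness, $q\vee p''\le q'\vee p''=p$, and
\[\mathrm{c}_Z(q\vee p'')=\mathrm{c}_Z(q)\vee_Z\mathrm{c}_Z(p'')=\mathrm{c}_Z(q')\vee_Z\mathrm{c}_Z(p'')=\mathrm{c}_Z(q'\vee p'')=\mathrm{c}_Z(p),\]
so $p\precsim_Z q\vee p''\le p$ and $\precsim_Z$-finiteness of $p$ forces $q\vee p''=p$. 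Finally, $\mathbb{P}$-modularity applied to the disjoint pair $c',c\in Z$ with $p''\le c'$, $q\le c$ and $p''\vee q$ existing gives $q=c\wedge(p''\vee q)=c\wedge p=p\wedge\mathrm{c}_Z(q)=q'$, as required. I expect the delicate points to be checking that $q\vee p''$ has the same $Z$-cover as $p$ (so that finiteness bites) and keeping the orientation of $\mathbb{P}$-modularity straight in the two places it is applied; the rest is bookkeeping.
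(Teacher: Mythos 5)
Your proposal is correct and follows essentially the same route as the paper: your ``key fact'' argument (split $p$ by $\mathbb{P}$-centrality relative to $c=\mathrm{c}_Z(q)$, form $q\vee p''$ using $Z$-directedness, compare $Z$-covers to make $\precsim_Z$-finiteness bite, then apply $\mathbb{P}$-modularity to conclude $q=p\wedge\mathrm{c}_Z(q)$) is exactly the paper's converse direction, and your appeal to \eqref{cpz} for $\mathrm{c}_Z(p\wedge z)=\mathrm{c}_Z(p)\wedge z=z$ is the same step the paper uses for the isomorphism. The only differences are cosmetic: you argue the easy direction directly rather than by contraposition, and you add the harmless (indeed slightly more careful) identification of the central decomposition pieces as $p\wedge c$ and $p\wedge c'$, which also settles existence of the meets $p\wedge z$.
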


\begin{proof} If $p\notin\mathrm{F}_{\precsim_Z}$ then we have $q\in\mathbb{P}$ with $p\precsim_Zq<p$.  Then $\mathrm{c}_Z(q)=\mathrm{c}_Z(p)$ so $p\wedge z=p$, for any $z\geq q$, and hence $q\notin\{p\wedge z:z\in Z\}$.

Conversely, say $p\in\mathrm{F}_{\precsim_Z}$ and $q\leq p$.  As $Z$ is $\mathbb{P}$-central, we have $z\in Z$ with $\mathrm{c}_Z(q)\wedge z=0$ and $p=r\vee s$, for some $r\leq\mathrm{c}_Z(q)$ and $s\leq z$.  As $\mathbb{P}$ is $Z$-directed, $q\vee s$ exists and
\[\mathrm{c}_Z(p)=\mathrm{c}_Z(r\vee s)=\mathrm{c}_Z(r)\vee_Z\mathrm{c}_Z(s)\leq\mathrm{c}_Z(q)\vee_Z\mathrm{c}_Z(s)=\mathrm{c}_Z(q\vee s),\]
and hence $p\precsim_Z q\vee s\leq p$ which, as $p\in\mathrm{F}_{\precsim_Z}$, means $p=q\vee s$.  As $Z$ is $\mathbb{P}$-modular, $q=p\wedge\mathrm{c}_Z(q)$.  On the other hand, for $z\in[0,\mathrm{c}_Z(p)]\cap Z$, we have $\mathrm{c}_Z(p\wedge z)=\mathrm{c}_Z(p)\wedge z=z$, by \eqref{cpz}, so the given maps are indeed isomorphisms.
\end{proof}

The $\precsim_Z$-finite elements go by several other names in more restrictive contexts, as can be seen from the above characterization.  In effect algebras they are called monads (see \cite{FoulisPulmannova2010b} Definition 5.12), in complete lattices (with $Z=\mathrm{C}(\mathbb{P})$) they are called lowest elements (see \cite{MaedaF1959} Definition 2.2 and Theorem 2.1) and in orthomodular lattices they are called simple elements (see \cite{Loomis1955} page 2).  When $Z=\mathrm{C}(\mathbb{P})$ and $\mathbb{P}$ has the relative centre property (see \cite{Chevalier1991}), i.e. when we have $\mathrm{C}([0,p])=\{p\wedge z:z\in\mathrm{C}(\mathbb{P})\}$ for all $p\in\mathbb{P}$, the above result shows that $p$ is $\precsim_Z$-finite precisely when $[0,p]$ is a Boolean lattice.  Such elements are called Boolean in \cite{FoulisPulmannova2010b} Definition 5.12 and $D$-elements in \cite{Kaplansky1955} \S9.  And if we go back to our favourite example where $\mathbb{P}=\mathcal{P}(A)$ and $Z=\mathcal{P}(A\cap A')$, for some von Neumann algebra $A$, then we see that the $\precsim_Z$-finite elements are precisely the abelian projections.  Thus the $\precsim_Z$-finite elements give us an analog of abelian projections, with which we can even do type decomposition, in a very general class of posets $\mathbb{P}$ with a distinguished subset $Z$.





\section{Homogeneous Decompositions}\label{HD}

One conspicuous absence in previous order theoretic treatments of type decomposition is an analog of the type $\mathrm{I}_n$ parts in the classical von Neumann algebra type decomposition.  These can be obtained immediately if one has a dimension function on $\mathbb{P}$, but the construction of a dimension function requires a significant amount of extra structure (see \cite{Maeda1955}, \cite{Kalinin1976} and \cite{GoodearlWehrung2005}).  Here we present an elementary method for obtaining such decompositions, at least in the orthocomplemented case, more in the spirit of \cite{Berberian1972} \S18.

\begin{dfn}\label{homdef}
Given a binary relation $R$ on a poset $\mathbb{P}$ we call $p\in\mathbb{P}$ \emph{$R$-homogeneous} if $p=\bigvee S$ for some $S\subseteq\mathbb{P}$ with $S\times S\subseteq R\ \cup=$, i.e. $sRt$ for all distinct $s,t\in S$.  We say such a $p$ has \emph{order} $\kappa$ when $|S|=\kappa$ and denote the set of order $\kappa$ $R$-homogeneous $p$ by $I_\kappa$.  Given $Z\subseteq\mathbb{P}$, we call $p\in\mathbb{P}$ \emph{$R$-subhomogeneous} if $p=\bigvee S$ for some $Z$-disjoint $S\subseteq\mathbb{P}$ consisting of $R$-homogeneous elements.
\end{dfn}

Note that order is not, in general, uniquely defined, and $I_\kappa$ consists of all those $p$ for which there is at least one set $S$ of cardinality $\kappa$ witnessing its $R$-homogeneity.

Given an orthoposet $\mathbb{P}$ (see \cite{Kalmbach1983} \S2 page 16) and $I,Z\subseteq\mathbb{P}$, we define the canonical homogeneity relation $\mathrm{H}=\mathrm{H}_{I,Z}=I\times I\ \cap\perp\cap\sim_Z$, i.e. \[p\mathrm{H}q\quad\Leftrightarrow\quad p,q\in I,p\perp q\textrm{ and }p\sim_Zq.\]  Note that below we follow standard order terminology and call $I\subseteq\mathbb{P}$ \emph{order-dense} when every $p\in\mathbb{P}$ dominates some non-zero $q\in I$.  We also call an orthoposet $\mathbb{P}$ \emph{orthocomplete} when $\bigvee S$ exists, for any pairwise orthogonal $S\subseteq\mathbb{P}$.\footnote{We can turn any orthoposet $\mathbb{P}$, in which orthogonal pairs have a supremum, into a pre-effect algebra (see \cite{ChajdaKuhr2012}) by defining $p+q=p\vee q$, whenever $p\perp q$.  \autoref{homthm} could also be generalized to orthocomplete pre-effect algebras, although we omit the details, as we do not wish to introduce a fundamental new structure at this stage.}

\begin{thm}\label{homthm}
For an orthocomplete orthoposet $\mathbb{P}$, complete sublattice $Z^\perp=Z\subseteq\mathrm{C}(\mathbb{P})$ and order-dense $Z$-complete $I\subseteq\mathbb{P}$, $1=\bigvee\mathbb{P}$ is $\mathrm{H}_{I,Z}$-subhomogeneous.  If, further, $I_\lambda\cap I_\kappa\cap Z=\{0\}$, whenever $\lambda\neq\kappa$, then there are unique orthogonal $(z_\kappa)\subseteq Z$ with $z_\kappa\in I_\kappa$, for all $\kappa$, and $1=\bigvee z_\kappa$.
\end{thm}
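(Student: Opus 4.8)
The plan is to route everything through one local statement, which I will call $(\star)$: \emph{every nonzero central element of $\mathbb{P}$ dominates a nonzero $\mathrm{H}_{I,Z}$-homogeneous element of $Z$}. First I would assemble the facts that make the earlier machinery available here. Since $Z^\perp=Z$ is a complete sublattice of $\mathrm{C}(\mathbb{P})$, for any $y\in Z$ one has $p=(p\wedge y)\vee(p\wedge y^\perp)$ with $y\wedge y^\perp=0$, so $Z$ is $\mathbb{P}$-central; via the product decomposition along a central element $Z$ is also $\mathbb{P}$-modular and $Z$-modular, hence (by \autoref{pwedgez}) $Z$ is $Z$-complete. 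Two disjoint central elements are orthogonal (if $y\wedge z=0$ then $z=z\wedge y^\perp\leq y^\perp$), so every $Z$-disjoint family is pairwise orthogonal; orthocompleteness then makes $\mathbb{P}$ itself $Z$-complete and makes $\bigvee S$ exist for every pairwise orthogonal $S$, with the de Morgan law $(\bigvee S)^\perp=\bigwedge S^\perp$. I would also record that $\mathrm{c}_Z$ preserves arbitrary suprema, $\mathrm{c}_Z(p\wedge z)=\mathrm{c}_Z(p)\wedge z$ by \eqref{cpz}, a central element distributes over joins of its pieces, and consequently: a pairwise orthogonal family in $I$ with common $Z$-cover $c$ has a join which is $\mathrm{H}_{I,Z}$-homogeneous with $Z$-cover $c$, and this is stable under truncating each member by a $z\in Z$.

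Granting $(\star)$, the first assertion follows by a transfinite recursion: having removed a $Z$-disjoint family of central $\mathrm{H}_{I,Z}$-homogeneous elements whose supremum is $v\in Z$, apply $(\star)$ to the central complement $1\wedge v^\perp$ (when nonzero) to peel off another piece, taking central suprema at limits; the accumulated supremum strictly increases, so this terminates with $1$ written as a $Z$-disjoint join of central $\mathrm{H}_{I,Z}$-homogeneous elements, and in particular $1$ is $\mathrm{H}_{I,Z}$-subhomogeneous.

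For the second assertion, set $z_\kappa=\bigvee_Z(I_\kappa\cap Z)$. Using the common-cover observation, its central-truncation stability, and the fact that $Z$ is a complete Boolean algebra (reducing an arbitrary supremum in $Z$ to a $Z$-disjoint one via $z'_j=z_j\wedge(\bigvee_{i<j}z_i)^\perp$), one checks that $I_\kappa\cap Z$ is closed under arbitrary suprema in $Z$ and under meets with members of $Z$, so $z_\kappa\in I_\kappa\cap Z$ for every $\kappa$. If $z_\lambda\wedge z_\kappa\neq0$ with $\lambda\neq\kappa$, truncating shows $z_\lambda\wedge z_\kappa\in I_\lambda\cap I_\kappa\cap Z$, contradicting the hypothesis; hence $(z_\kappa)$ is $Z$-disjoint, so orthogonal. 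Writing $1$ as a $Z$-disjoint join of central $\mathrm{H}_{I,Z}$-homogeneous elements $c_j$ as above, the hypothesis forces each $c_j$ to have a well-defined order $\kappa_j$, so $c_j\leq z_{\kappa_j}$ and $1=\bigvee c_j\leq\bigvee_\kappa z_\kappa$, whence $1=\bigvee z_\kappa$. For uniqueness, any admissible $(z'_\kappa)$ satisfies $z'_\kappa\in I_\kappa\cap Z$, so $z'_\kappa\leq z_\kappa$; then for fixed $\kappa_0$, $z_{\kappa_0}=z_{\kappa_0}\wedge\bigvee_\kappa z'_\kappa=\bigvee_\kappa(z_{\kappa_0}\wedge z'_\kappa)=z_{\kappa_0}\wedge z'_{\kappa_0}=z'_{\kappa_0}$, using $z'_\kappa\wedge z_{\kappa_0}\leq z_\kappa\wedge z_{\kappa_0}=0$ for $\kappa\neq\kappa_0$.

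The real work is $(\star)$. Given nonzero central $u$, order-density gives nonzero $t\in I$ with $t\leq u$; set $c=\mathrm{c}_Z(t)\leq u$. By Zorn's Lemma choose a maximal pairwise orthogonal family $(t_j)_{j\in J}$ in $I$ with $t_j\leq c$ and $\mathrm{c}_Z(t_j)=c$ for all $j$, put $h=\bigvee t_j$ (so $\mathrm{c}_Z(h)=c$), and let $d=\mathrm{c}_Z(c\wedge h^\perp)\leq c$. If $d<c$ then $c\wedge d^\perp$ lies below $h$ (centrality gives $c\wedge(c^\perp\vee h)=h$), and truncating each $t_j$ by $c\wedge d^\perp$ exhibits $c\wedge d^\perp$ as a nonzero central $\mathrm{H}_{I,Z}$-homogeneous element below $u$, as required. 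The obstacle is the case $d=c$, where the leftover $c\wedge h^\perp$ still has full $Z$-cover $c$: maximality then only forbids adding a \emph{further} full-cover copy, and one is forced to iterate — inside the leftover take a nonzero $I$-element, pass to its (strictly smaller) $Z$-cover, where the truncated family together with the new element is pairwise orthogonal with common cover and strictly longer, and repeat. The crux is to show this cannot continue indefinitely, i.e. to control the limit stages so that the descending chain of central covers does not collapse to $0$ and a genuine central homogeneous piece is extracted; I expect this to be the decisive difficulty, and to be precisely where orthocompleteness and the compatibility of $\mathrm{c}_Z$ with orthogonal suprema (recorded at the outset) are genuinely used.
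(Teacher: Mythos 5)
Your reduction of the theorem to the statement $(\star)$ (every nonzero central element dominates a nonzero $\mathrm{H}_{I,Z}$-homogeneous element of $Z$) is sound, and your treatment of the second assertion is essentially the same bookkeeping the paper does. But the proposal has a genuine gap exactly where you say you expect one: $(\star)$ is never actually proved. Having chosen a maximal orthogonal family $(t_j)\subseteq I$ below $c=\mathrm{c}_Z(t)$ with $\mathrm{c}_Z(t_j)=c$, you handle the case $d=\mathrm{c}_Z(c\wedge h^\perp)<c$, but in the case $d=c$ you only sketch an open-ended iteration through strictly smaller covers and admit you cannot control its limit stages. As written, this is not a proof; the decisive step is missing.

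The missing ingredient is \autoref{cZI}, which in fact shows the feared case $d=c$ cannot occur, so no iteration is needed. Put $r=c\wedge h^\perp$ and note $I\cap[0,r]$ is $Z$-complete, so \autoref{cZI} yields $s\in I$ with $s\leq r$ and $\mathrm{c}_Z(s)=\bigvee_Z\mathrm{c}_Z(I\cap[0,r])$, and this supremum equals $d$: for $y\in Z$, $y\wedge d=0$ gives $y\wedge r=0$ (as $r\leq d$), while $y\wedge d\neq0$ forces $y\wedge r\neq0$ (otherwise $r\leq y^\perp\in Z$, so $d\leq y^\perp$) and then order-density gives a nonzero element of $I$ below $y\wedge r$; so $d$ satisfies \eqref{cZIeq} for $I\cap[0,r]$ and the uniqueness clause applies. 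If $d=c$, this $s$ would be a full-cover element of $I$ below $c$ orthogonal to every $t_j$, contradicting maximality; hence always $d<c$ and your own argument finishes $(\star)$. With that lemma inserted your route is correct, though it is genuinely different in shape from the paper's: the paper makes one global transfinite recursion, choosing $p_\alpha\in I\cap[0,(\bigvee_{\beta<\alpha}p_\beta)^\perp]$ of maximal $Z$-cover (attained, again by \autoref{cZI}) and reading off the homogeneous central pieces as the bands $z_\alpha=\mathrm{c}_Z(p_\alpha)^\perp\wedge\bigwedge_{\beta<\alpha}\mathrm{c}_Z(p_\beta)$ where the decreasing covers drop, using order-density and \eqref{cpz} to see $z_\alpha=\bigvee_{\beta<\alpha}(z_\alpha\wedge p_\beta)$ is homogeneous; this produces the whole subhomogeneous decomposition of $1$ at once, whereas your local extraction-plus-exhaustion scheme trades that for the lemma above.
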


\begin{proof}
By \autoref{cZI}, we may recursively define $p_\alpha\in I_\alpha$ so that $\mathrm{c}_Z(p_\alpha)=\bigvee\mathrm{c}_ZI_\alpha$, where $I_\alpha=I\cap[0,(\bigvee_{\beta<\alpha}p_\beta)^\perp]$.  Let $z_\alpha=\mathrm{c}_Z(p_\alpha)^\perp\wedge\bigwedge_{\beta<\alpha}\mathrm{c}_Z(p_\beta)$.  If we had $z=z_\alpha\wedge(\bigvee_{\beta<\alpha}p_\beta)^\perp\neq0$ then the order density of $I$ would give us non-zero $p\in I\cap[0,z]\subseteq I_\alpha$ and hence $p\leq\mathrm{c}_Z(p_\alpha)$ even though $p\leq z\leq z_\alpha\leq\mathrm{c}_Z(p_\alpha)^\perp$, a contradiction.  Thus, as $z_\alpha\in\mathrm{C}(\mathbb{P})$, $z_\alpha=z_\alpha\wedge\bigvee_{\beta<\alpha}p_\beta=\bigvee_{\beta<\alpha}(z_\alpha\wedge p_\beta)$.  By \eqref{cpz}, \[\mathrm{c}_Z(z_\alpha\wedge p_\beta)=z_\alpha\wedge\mathrm{c}_Z(p_\beta)=z_\alpha,\] for each $\beta<\alpha$, so each $z_\alpha$ is $\mathrm{H}_{I,Z}$-homogeneous.  As the $(p_\alpha)$ are orthogonal, they must be eventually $0$.  Thus $(\bigvee z_\alpha)^\perp=\bigwedge\mathrm{c}_Z(p_\alpha)=0$ so the $(z_\alpha)$ witness the $\mathrm{H}_{I,Z}$-subhomogeneity of $1$.

As $I$ is $Z$-complete so is $I_\kappa$, for each $\kappa$, so we can join together resulting homogeneous elements of $Z$ of the same order to obtain $(z_\kappa)\subseteq Z$ with $z_\kappa\in I_\kappa$, for all $\kappa$.  Uniqueness now follows from \autoref{IcapZ}, as $I_\lambda\cap I_\kappa\cap Z=\{0\}$, for $\lambda\neq\kappa$, means that $z_\kappa=\bigvee I_\kappa\cap Z$, for all $\kappa$.
\end{proof}

When $\mathbb{P}=\mathcal{P}(A)$ and $Z=\mathcal{P}(A\cap A')$, for some type $\mathrm{I}$ von Neumann algebra $A$, and $I$ is the set of abelian projections, the above theorem does indeed apply and then $z_\kappa A$ is none other than the type $\mathrm{I}_\kappa$ part of $A$.  However, actually verifying that the required hypthoses are satisfied here is not as easy as it was for the type decomposition results in \S\ref{TD}, and so we now explain this in a little more detail.

Firstly, for order density, let $p\in\mathcal{P}(A)$ be an abelian projection with $\mathrm{c}(p)=1$.  For any non-zero $q\in\mathcal{P}(A)$, we then have $q\mathrm{c}(p)=q\neq0$ so $pAq\neq\{0\}$.  Thus we can find a non-zero partial isometry $u\in pAq$.  As $uu^*\leq p$ and $p$ is abelian, so is $uu^*$ and hence $u^*u\leq q$ is abelian too.  As $q\in\mathcal{P}(A)$ was arbitrary, we are done.  Essentially the same argument applies more generally to abelian annihilators in a discrete C*-algebra, except that it takes more effort to show that the equivalence relation generalizing Murray-von Neumann equivalence also always preserves abelian-ness (see \cite{Bice2014} Corollary 3.54).

On the other hand, to show that $I_\lambda\cap I_\kappa\cap Z=\{0\}$, i.e. that the order of a homogeneous central projection is uniquely defined, is far less trivial.  If $\lambda$ or $\kappa$ is finite then it follows from the fact that the finite projections form an (order) ideal in $\mathcal{P}(A)$ (see \cite{Berberian1972} \S17 Theorem 2).  We do not know if this holds more generally for finite annihilators in C*-algebras, but we can still use representations to show that a finite supremum of \emph{abelian} annihilators is finite (see \cite{Bice2014} \S3.9) which is sufficient, at least for (ortho)separable C*-algebras.  While for infinite $\lambda$ or $\kappa$ the proof uses the local orthoseparability of von Neumann algebras (see \cite{Berberian1972} \S18 Exercise 10) and does not even hold in general for projections in type I AW*-algebras (see \cite{Ozawa1985}), let alone annihilators in more general discrete C*-algebras.

Also, we should point out that in the definition of a homogeneous projection in a von Neumann algebra it is usually Murray von Neumann equivalence $\sim_\mathrm{MvN}$ that is used, as in \cite{Berberian1972} \S18 Definition 1, rather than central equivalence $\sim_Z$ as done here.  However, this makes no difference, as one can use generalized comparability (\cite{Berberian1972} \S14 Definition 1) to show that $\sim_\mathrm{MvN}$ coincides with $\sim_Z$ on abelian projections. 

\newpage

\bibliography{maths}{}
\bibliographystyle{alphaurl}

\end{document}